\documentclass[11pt, reqno]{amsart}
\usepackage{amsmath, amssymb, amsthm, graphicx,marvosym}
\usepackage{cancel}
\usepackage[nobysame]{amsrefs}[11pts]

\usepackage{color}

\setlength{\evensidemargin}{.5in} \setlength{\oddsidemargin}{.5in}
\setlength{\textwidth}{5.5in} \setlength{\topmargin}{0in}
\setlength{\textheight}{8.5in}

\usepackage{tikz-cd}
\usepackage{tikz}
\usepackage{caption}
\usepackage{amsmath}
\usepackage{amssymb}
\usepackage{amsthm}
\usepackage{enumerate}
\newtheorem{thm}{Theorem}
\newtheorem{lemma}{Lemma}[section]
\newtheorem{cor}[lemma]{Corollary}
\newtheorem{prop}[lemma]{Proposition}
\newtheorem{definition}[lemma]{Definition}
\newtheorem{remark}[lemma]{Remark}
\newcommand \nc{\newcommand}
\newcommand{\ben}{\begin{eqnarray}}
\newcommand{\een}{\end{eqnarray}}
\newcommand{\beno}{\begin{eqnarray*}}
\newcommand{\eeno}{\end{eqnarray*}}

\makeatletter \@addtoreset{equation}{section} \makeatother

\nc{\ba}{\begin{array}}\nc{\ea}{\end{array}}

\nc{\be}{\begin{eqnarray}}\nc{\ee}{\end{eqnarray}}
\nc{\beq}{\begin{equation}}\nc{\eeq}{\end{equation}}
\nc{\bex}{\begin{eqnarray*}}\nc{\eex}{\end{eqnarray*}}
\nc{\btm}{\begin{theorem}} \nc{\etm}{\end{theorem}}
\nc{\blm}{\begin{lemma}} \nc{\elm}{\end{lemma}}
\nc{\va}{\varphi}
\nc{\ve}{\varepsilon}

\def\ds{\displaystyle}

 \newcommand{\n}{{\bf n}}
\newcommand{\bu}{{\bf u}}

\newcommand{\R}{\mathbb {R}}

\def\({\left(\begin{array}{cccccc}}
\def\){\end{array}\right)}

\def\bes{\begin{eqnarray}}
\def\ees{\end{eqnarray}}


\title[Wave model for liquid-crystals]{The Poiseuille flow of the full Ericksen-Leslie model for nematic liquid crystals: The general Case}

\author{Geng Chen}
\address[G. Chen] {Department of Mathematics, University of Kansas, Lawrence, KS 66045, U.S.A.,}
\email{\tt gengchen.edu}

\author{Weishi Liu}
\address[W. Liu]{Department of Mathematics,
University of Kansas, Lawrence, KS 66045, U.S.A.}
\email{\tt wsliu@ku.edu}

\author{Majed Sofiani}
\address[M. Sofiani]{Department of Mathematics, University of Kansas, Lawrence, KS 66045, U.S.A.}
\email{\tt sofiani@ku.edu}

\date{\today}

\begin{document}

\begin{abstract} In this work, we study the Cauchy problem of Poiseuille flow of the full Ericksen-Leslie model for nematic liquid crystals. The model is a coupled system of two partial differential equations (PDEs): One is a quasi-linear wave equation for the director field representing the crystallization of the nematics, and the other is  a parabolic PDE for the velocity field characterizing the liquidity of the material. We extend the work in [Chen, et. al. {\em Arch. Ration. Mech. Anal.} {\bf 236} (2020), 839-891] for a special case to the general physical setup. The Cauchy problem  is shown to have global solutions beyond singularity formation. Among a number of progresses made in this paper, a particular contribution is a systematic treatment of a parabolic PDE with only H\"older continuous diffusion coefficient and rough (maybe unbounded) nonhomogeneous terms.
\end{abstract}

\maketitle

\section{Introduction}\label{intro}

In this paper we consider existence and regularity for Poiseuille flow of nematic liquid crystals. Liquid crystals have many forms and a particular form is the nematic whose molecules can be viewed as 
rod-like/thread-like.  Macroscopically,  the state of a nematic liquid crystal is  characterized by its velocity field ${\bf u}$ for the flow and its director field  ${\bf n}\in \mathbb S^2$ for the alignment of the rod-like feature.  These two characteristics interact with each other so that any distortion of the director $\n$ causes 
a motion ${\bf u}$  and, likewise,  any flow ${\bf u}$ affects the alignment $\n$. Using the convention to denote $\dot{f}=f_t+\bu\cdot \nabla f$ the material derivative, the full Ericksen-Leslie model for nematics  is given as follows
\begin{equation}\label{wlce}
\begin{cases}
\rho\dot \bu+\nabla P=\nabla\cdot\sigma-\nabla\cdot\left(\frac{\partial W}{\partial\nabla \n}\otimes\nabla \n\right),
\\
\nabla\cdot \bu=0,\\ 
\nu{\ddot \n}=\lambda\n-\frac{\partial W}{\partial  \n}-{\bf g}+\nabla\cdot\left(\frac{\partial W}{\partial\nabla \n}\right), \\
|\n|=1.\\
\end{cases}
\end{equation}
In  (\ref{wlce}),  $P$ is the pressure, $\lambda$ is the Lagrangian multiplier of the constraint $|\n|=1$, $\rho$ is the density, $\nu$ is the inertial coefficient of the director $\n$, and $W$, 
${\bf g}$  and $\sigma$ are the Oseen-Frank energy,  the kinematic transport and the viscous stress tensor, respectively (see, e.g., \cites{DeGP,ericksen62, frank58, liucalderer00,leslie68,Les, lin89} for details).

\subsection{Poiseulle flow of nematics} For Poiseulle flows of  nematics with a choice of coordinates system, ${\bf u}$ and ${\bf n}$ take the form
\[{\bf u}(x,t)= (0,0, u(x,t))^T \;\mbox{ and }\;{\bf n}(x,t)=(\sin\theta(x,t),0,\cos\theta(x,t))^T,\]
where the motion ${\bf u}$ is along the $z$-axis and   the director ${\bf n}$ lies in the $xz$-plane with angle $\theta$ made from the $z$-axis.   For this case of Poiseulle flows, taking $\rho=\nu=1$ for simplicity, the Ericksen-Leslie  model is reduced to system (\ref{sysf}) and (\ref{sysw}) below, whose  derivation is available in the literature (see, e.g., \cite{CHL20}),
\begin{align}\label{sysf}
    \ds u_t&=\left(g(\theta)u_x+h(\theta)\theta_t\right)_x,\\\label{sysw}
\theta_{tt}+\gamma_1\theta_t&=c(\theta)\big(c(\theta)\theta_x\big)_x-h(\theta)u_x.
\end{align}
We will be interested in Cauchy problem of the Poiseulle flow  
 with initial data 
\beq\label{initial}
u(x,0)=u_0(x)\in H^1(\R),\; \theta(x,0)=\theta_0(x)\in H^1,
\; \theta_t(x,0)=\theta_1(x)\in L^2(\R).
\eeq
We further impose an assumption
\beq\label{tecinitial}\begin{split}
u'_0(x)+\frac{h(\theta_0(x))}{g(\theta_0(x))}\theta_1(x):=J_0&\in H^1(x) \cap L^1(\R).
\end{split}
\eeq
To this end, we make an important comment on condition (\ref{tecinitial}). It is known singularity may form in finite time in the sense that $u_x$ and $\theta_t$ can approach as $(x,t)\to (x^*,t^*)$ for some $(x^*,t^*)$. The quantity $J:=u_x+\frac{h(\theta)}{g(\theta)}\theta_t$ turns out to be in $C^\alpha\cap L^2\cap L^\infty(\R\times [0,T])$ with $0\leq \alpha<\frac{1}{4}$ for any $t$ when condition (\ref{tecinitial}) holds. The better regularity of $J$  reveals a cancellation of finite time singularities of $u$ and $\theta$ and plays crucial role for the study of existence after singularity formation. This key observation was first made in \cite{CHL20} for a special case.
 We further need $J_0\in L^1(\R)$ such that an important auxiliary function $A_0(x)=\int_{-\infty}^x J_0(z)dz$ is well defined. The condition $J_0\in H^1$ is needed for estimates in (\ref{Gfbound}) below and is roughly in line with the space of $J$. 

 
 The functions $c(\theta)$, $g(\theta)$ and $h(\theta)$ in the model  are given by  
\begin{align}\label{fgh}\begin{split}
 g(\theta):=&\alpha_1\sin^2\theta\cos^2\theta+\frac{\alpha_5-\alpha_2}{2}\sin^2\theta+\frac{\alpha_3+\alpha_6}{2}\cos^2\theta+\frac{\alpha_4}{2},\\
  h(\theta):=&\alpha_3\cos^2\theta-\alpha_2\sin^2\theta=\frac{\gamma_1+\gamma_2\cos(2\theta) }{2},\\
  c^2(\theta):=&K_1\cos^2\theta+K_3\sin^2\theta,
 \end{split}
 \end{align}
 where, depending on the nematic material and temperature, $K_1>0$ and $K_3>0$ are Frank's coefficients for splay and bending energies,  and $\alpha_j$'s $(1 \leq j \leq 6)$ are the Leslie dynamic coefficients. The following relations are assumed in the literature.
\begin{align}\label{a2g}
\gamma_1 =\alpha_3-\alpha_2,\quad \gamma_2 =\alpha_6 -\alpha_5,\quad \alpha_2+ \alpha_3 =\alpha_6-\alpha_5. 
\end{align}
The first two relations are compatibility conditions, while the third relation is called a Parodi's relation, derived from Onsager reciprocal relations expressing the equality of certain relations between flows and forces in thermodynamic systems out of equilibrium (cf. \cite{Parodi70}). 
They also satisfy the following empirical relations (p.13, \cite{Les}) 
\begin{align}\label{alphas}
&\alpha_4>0,\quad 2\alpha_1+3\alpha_4+2\alpha_5+2\alpha_6>0,\quad \gamma_1=\alpha_3-\alpha_2>0,\\
&  2\alpha_4+\alpha_5+\alpha_6>0,\quad 4\gamma_1(2\alpha_4+\alpha_5+\alpha_6)>(\alpha_2+\alpha_3+\gamma_2)^2\notag.
\end{align}
Note that the fourth relation  is implied by the third together with the last relation and the last can be rewritten as
$\gamma_1(2\alpha_4+\alpha_5+\alpha_6)> \gamma_2^2$.
 {Very importantly, relations (\ref{a2g}) and (\ref{alphas}) imply that (see formula (2.4) in \cite{CHL20}),  for some constant $ \overline C>0$,
\begin{align}\label{positiveDamping}
g(\theta)\ge g(\theta)-\frac{h^2(\theta)}{\gamma_1}>\overline C.
\end{align}
}

It is known that the solution of system \eqref{sysf} and \eqref{sysw} generically form finite time cusp singularity \cites{CS22,CHL20}.{ The goal of this paper is to establish a global existence of H\"older continuous solutions for the Cauchy problem  \eqref{sysf}-\eqref{tecinitial} beyond singularity formation as stated in Theorem \ref{main} below.}

\subsection{Directly relevant results and our results.}
 
 In \cite{CHL20}, a special case of Poiseuille flow was treated.  More precisely, the authors chose the parameters as
\[\rho=\nu=1,\; \alpha_1=\alpha_5=\alpha_6=0,\;\alpha_2=-1,\;\alpha_3=\alpha_4=1,\]
which result in $\gamma_1=2$, $\gamma_2=0$, and $g=h=1$. With this special choice of parameters, system (\ref{sysf}) and (\ref{sysw}) becomes
\begin{align}\label{simeqn0}
\begin{split}
 u_t=&(u_x+\theta_t)_x,\\
\theta_{tt}+2\theta_t=&c(\theta)(c(\theta)\theta_{x})_x
 -u_x.
\end{split}
\end{align}
 In \cite{CHL20},  on one hand, the authors constructed solutions for \eqref{simeqn0} with smooth initial data that produce, in finite time, cusp singularities—blowups of $u_x$ and $\theta_t$. The method directly extends  that of \cites{CZ12, GHZ} for  variational wave equations.   On the other hand, the global existence of weak solutions, which are H\"older continuous, of system \eqref{simeqn0} were established for general initial data similar to (\ref{initial}). 
 The latter resolved satisfactorily the physical concerns from application point of view about what happens after the finite time singularity formation. 
  A crucial ingredient for existence beyond singularity formation is the  identification of the quantity
\[J(x,t)=u_x(x,t)+\theta_t(x,t)\]
 and the reveal of a singularity cancellation—the quantity $J$ remains bounded and H\"older continuous while  its components $u_x$ and $\theta_t$ approach infinity at formations of cusp singularities.


%
%

The change of coordinates framework in \cite{BZ}  for the variational wave equations was used to cope with the wave part $\eqref{simeqn0}_2$, and will be used in this paper too for \eqref{sysw}.
The detailed idea will be given in Section \ref{idea}. See other works on the global well-posedness of H\"older continuous solutions for variational wave equations \cites{BC,BC2015,BH,BHY,BCZ, CCD, ZZ03,ZZ10,ZZ11,GHZ}.

 In a recent paper \cite{CS22}, the singularity formation for the general system \eqref{sysf} and \eqref{sysw} was established. 
 As mentioned above, we are concerning with the global existence of the Cauchy problem for the general system \eqref{sysf} and \eqref{sysw}. {It should be pointed out that the generalization is far beyond straightforward. 
One apparent trouble  is that the diffusion coefficient $g(\theta(x,t))$ in the parabolic equation \eqref{sysf} is only H\"older continuous, which creates difficulties in handling the quantity $J$ for the singularity (see system (\ref{J_eq}) and the discussion followed for details). This leads the authors to introduce and work with the potential $A$ of $J$ in (\ref{VarA}).  Another difficulty is caused by rough (worse than H\"older continuous) non-homogeneous terms in the parabolic equation for $A$, in addition to the diffusion coefficient $g(\theta(x,t))$ being only H\"older continuous. This difficulty is overcome with a careful analysis in \cite{MS} that relies on but goes beyond treatments in \cite{Fri}. The work in \cite{MS} has a much more broad interest besides a direct application to the present work.}

For the statement of our result, we need the following definition of weak solutions.
\begin{definition}\label{def1}
For any fixed time $T\in (0,\infty)$, we say that $(u(x,t),\theta(x,t))$ is a weak solution of \eqref{sysf}, \eqref{sysw}, and \eqref{initial} over $\R\times [0,T]$ if
\begin{itemize}
 \item 
 For any test function $\phi\in H^1_0(\R\times (0,T)),$
\begin{align}\label{thetaweak1}
    \int_0^T\int_\R\theta_t\phi_t-\gamma_1\theta_t\phi\,dx\,dt=\int_0^T\int_\R (c(\theta)\theta_x)(c(\theta)\phi)_x+hu_x\phi\,dx\,dt,
\end{align}
\begin{align}\label{uweak}
    \int_0^T\int_R u\phi_t-(gu_x+h\theta_t)\phi_x\,dx\,dt=0.
\end{align}

\item The initial data
\begin{align}\label{data}
    u(x,0)=u_0(x),\; \theta(x,0)=\theta_0(x),\; \text{and}\; \theta_t(x,0)=\theta_1(x)
\end{align}
hold point-wise for $u(x,0)$ and $\theta(x,0),$ and in $L^p$ sense for $\theta_t(x,0)$ for any $p\in[1,2).$ 
\end{itemize}
\end{definition}

 Throughout the paper, we will always assume relations \eqref{a2g} and \eqref{alphas} for the Leslie coefficients $\alpha_j$'s and refer to $g$, $h$ and $c$ as the functions given in \eqref{fgh}. 
\begin{thm}[Global Existence]\label{main}  For any fixed time $T\in (0,\infty)$, the Cauchy problem of system \eqref{sysf} and \eqref{sysw} with  the initial conditions $u_0(x)$, $\theta_0(x)$ and $\theta_1(x)$ given in \eqref{initial} and \eqref{tecinitial} has a weak solution $(u(x,t),\theta(x,t))$ defined on $\R\times [0,T]$ in the sense of Definition \ref{def1}. 
    Moreover,
    \beq\label{u_est}
    u(x,t)\in L^\infty([0,T],H^1_{loc}(\R))\cap L^2([0,T],H^1(\R))\cap L^\infty(\R\times [0,T])\eeq and 
    \[\theta(x,t)\in C^{1/2}(\R\times [0,T])
    \]
and, for any $t\in[0,T]$, the associated energy
\begin{align}\label{calEdef}
\mathcal{E}(t):=\int_\R\theta_t^2+c^2(\theta)\theta_x^2+u^2\,dx
\end{align}
    satisfies 
    \begin{align}
        \mathcal{E}(t)\leq \mathcal{E}(0)-\int_0^t\int_\R (u_x+\frac{h}{g}\theta_t)^2+\theta_t^2\,dx\,dt.
    \end{align}
\end{thm}

\medskip

The rest of this paper is organized as follows. In Section \ref{idea}, we introduce the main idea of this work. In Section \ref{J2theta}, as the first step in carrying out the main idea, we analyze the wave equation for $\theta$ with a prescribed forcing term.
 In Section \ref{HK}, we recall some basic properties from \cite{Fri} on parabolic differential operators with only H\"older continuous diffusion coefficients. In Sections \ref{J2v}, we apply the formulation in Section \ref{HK} and results in \cite{MS} to analyze $u$-component.  In Sections \ref{FixedPoint}-\ref{bdE} we prove the  existence of weak solution for the Cauchy problem \eqref{sysf} and \eqref{sysw}.

\section{Main idea of this work}\label{idea}
 The approach developed in \cite{CHL20} for the special case with $g=h=1$ provides a framework for the general system \eqref{sysf} and \eqref{sysw}. There are, however, a number of crucial issues in this generalization.

Note that singularity formation is unavoidable in general \cite{CS22}. Similar to \cite{CHL20}, we introduce a new variable 
\[v=\int_{-\infty}^x u\,dx\]
 and obtain, from \eqref{sysf},
\beq\label{veq}
v_t=g(\theta)v_{xx}+h(\theta) \theta_t=g(\theta) u_x+h(\theta) \theta_t.
\eeq
Motivated by singularity cancellation revealed in \cite{CHL20},  we also introduce
\begin{align}\label{VarJ}
J=\frac{v_t}{g(\theta)}=u_x+\frac{h(\theta)}{g(\theta)}\theta_t,
\end{align}
which agrees with the function $J$  in \cite{CHL20} for the special case with $g=h=1$. 

Let's explain why $J$ enjoys an enhanced regularity for the simplified model considered in \cite{CHL20}:
\begin{align}\label{simeqn_20} \begin{split}
 v_t=&v_{xx}+\theta_t,\\ 
\theta_{tt}+\theta_t=&c(\theta)(c(\theta)\theta_{x})_x
-v_t,
\end{split}
\end{align}
where $J(x,t)=v_t=u_x+\theta_t$ for this simplified model. In \cite{CHL20}, we can show that $J$ has finite $L^2$, $L^\infty$ and $C^\alpha$ norms, with $\alpha\in(0, 1/4)$.  Given the fact that both $u_x$ and $\theta_t$ may blow up, the bound and regularity of $J$ are fundamentally important.
This result holds because of the different ``scales'' of time variable $t$ in heat equation and in wave equation. Very roughly speaking, $J=v_t\approx H*\theta_{tt}$ for the heat kernel $H$, which is very bad. However, one can use the wave equation to switch $\partial_{tt}$ to $\partial_{xx}$ and other lower order terms, although the switch is highly non-trivial and needs to be done for the weak solution. Since $H_x$ has much better regularity than $H_t$, we can show the enhanced regularity for $J$.

It follows from \eqref{veq} and \eqref{VarJ} that $J$ satisfies
\beq\label{J_eq}
(g(\theta)J)_t=g(\theta) (g(\theta)J)_{xx}+h(\theta) \theta_{tt}+h'(\theta) \theta_t^2+g'(\theta)\theta_t
J-g'(\theta)\frac{h(\theta)}{g(\theta)}\theta_t^2.\eeq
In \cite{CHL20}, we also take advantage of the explicit expression of $J$ in terms of $\theta$, using the heat kernel $H$, when we do the Schauder estimate.
However, when we come to the general system \eqref{sysf}-\eqref{sysw}, it turns out the coefficient $g(\theta)$ in $(g(\theta) J)_{xx}$ is  only H\"older continuous in general. 
And for the general case, we do not have an explicit formula using the heat kernel as in the special case.
The work in \cite{Fri} provides an implicit  expression for the kernel which can be used to treat the nonhomogeneous terms in (\ref{J_eq}) in an indirect way (see Section \ref{HK} for more details). In order to follow the framework in \cite{Fri} for the nonhomogeneous parabolic equation \eqref{J_eq},  
we introduce a new auxiliary function $A,$

\begin{align}\label{VarA}
A(x,t)=\hat A(x,t)-A_0(x)=\int_{-\infty}^xJ(z,t)\,dz-\int_{-\infty}^xJ(z,0)\,dz,
\end{align}
that is $A_x=J-J_0,$ with $J_0=J(x,0)$ determined by \eqref{tecinitial}. 
Essentially, $A$ is the potential of the function $J$. By considering  $A$, we can overcome some difficulty when we differentiate the non-explicit kernel of the heat equation with H\"older continuous coefficient \cite{MS}.

 By comparing equation (\ref{J_eq}) with equation (\ref{AAeq}) for $A$ given below, we avoid some complication in handling the rough term $\theta_{xx}$.

Instead of working directly on system \eqref{sysf} and \eqref{sysw}, we will treat the equivalent system in terms of the quantities $(v, \theta,A)$ as
\begin{align}
  v_t=g(\theta)v_{xx}&+h(\theta) \theta_t,\label{ueq}\\
   \theta_{tt}+\big(\gamma_1-\frac{h^2(\theta)}{g(\theta)}\big)\theta_t&=c(\theta)(c(\theta)\theta_x)_x-h(\theta)\hat A_x,\label{thetaeq}\\
    A_t=g(\theta)A_{xx}-&\gamma_1A+g'(\theta)\theta_xA_x+g'(\theta)\theta_xJ_0+F(\theta, v), \label{AAeq}
   \end{align}
  where
\begin{align}\label{Fun}\begin{split}
F&=G+f,\\
f&=[\gamma_1-\frac{h^2}{g}]v_x+\frac{h(\theta)c^2(\theta)}{g}\theta_x+g(\theta)J'_0,\\
G&=\int_{-\infty}^x[\frac{h'}{g}-\frac{g'h}{g^2}]\theta_t^2-[\gamma_1-\frac{h^2}{g}]'\theta_zv_z-(\frac{h(\theta)c(\theta)}{g})'c(\theta)\theta_z^2\,dz-\gamma_1 A_0,
    \end{split}  
\end{align}
and
\beq\label{a0a0x}
A_0(x)=\int_{-\infty}^xJ_0(z)\,dz=\int_{-\infty}^xJ(z,0)\,dz,
\eeq  
Note $A(x,0)=0.$
 
  The splitting of $F=f+G$ is based on different regularities of each term, as we will see later.
  A derivation for (\ref{AAeq}) from  \eqref{J_eq} is provided in Appendix \ref{App-A}.\\
   
Roughly, Theorem \ref{main} will be proved in the following steps.
 \bigskip


\paragraph{\bf Step 1:} For any given $J(x,t)\in C^\alpha\cap L^2\cap L^\infty$ for some $\alpha>0$, we consider the wave equation (\ref{thetaeq}) with $A_x$ being replaced by $J$ 
\begin{align}\label{waveJ}
    \theta_{tt}+\big(\gamma_1-\frac{h^2(\theta)}{g(\theta)}\big)\theta_t&=c(\theta)(c(\theta)\theta_x)_x-h(\theta)J.
    \end{align}\\
    Using very similar method as in \cite{CHL20},  the existence of a $C^{1/2}$ solution $\theta^J$ of (\ref{waveJ}) will be shown in Section \ref{J2theta}.
     \bigskip
    
\paragraph{\bf Step 2:}   With $\theta^J$ obtained from Step 1,  we then solve $v^J$  from the equation
    \begin{align}\label{vJ}
    v_t=g(\theta^J)v_{xx}+h(\theta^J)\theta_t^J,
    \end{align}
 and show that both $v^J$ and $u^J=v_x^J$ are in $ C^\alpha\cap L^2\cap L^\infty$ in Section \ref{J2v}.
    \bigskip
    
\paragraph{\bf Step 3:} With $(v^J,\theta^J)$ from the above steps, we will solve for $A^J$ from
\[A_t=g(\theta^J)A_{xx}-\gamma_1A+g'\theta_x^JJ+F(\theta^J,v^J).\]
An expression of $A^J=\mathcal{N}(J)$ by the so called parametrix method in \cite{Fri} is very helpful. Recall that $A_x+J_0=J$. After setting $\mathcal{M}(J)=\left(\mathcal{N}(J)\right)_x+J_0$, we then have  a fixed point problem for the map $J\mapsto \mathcal{M}(J)$ that will be analyzed by the Schauder fixed point theory in Section \ref{FixedPoint}.

\begin{center}
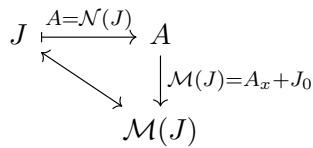
\begin{figure}[h]
\begin{tikzcd}
J  \arrow[dr,leftrightarrow] \arrow[r, mapsto,"A=\mathcal{N}(J)"] & A \arrow[d, "\mathcal{M}(J)=A_x+J_0"] \\
& \mathcal{M}(J)
\end{tikzcd}
\caption{A diagram explaining the relation between the quantities $A,J$ and the map $\mathcal{M}.$}
\label{diagram}
\end{figure}
\end{center}

\section{The solution $\theta$ of (\ref{waveJ}) with a fixed  $J$}\label{J2theta}

Given any function $J(x,t)\in L^2\cap L^\infty\cap C^\alpha$ for some $\alpha>0,$ we consider (\ref{waveJ}) recast below
\begin{align}\label{waveAx}
    \theta_{tt}+\big[\gamma_1-\frac{h^2(\theta)}{g(\theta)}\big]\theta_t&=c(\theta)(c(\theta)\theta_x)_x-h(\theta)J.
\end{align}
It is crucial that   $\gamma_1-\frac{h^2(\theta)}{g(\theta)}>C_*$  for some constant $C_*>0$ (see (\ref{positiveDamping})).


Using the change of coordinates method in \cites{BZ,CHL20}, we can prove the global existence of weak solution for \eqref{waveAx} in a very similar way as for the simplified system. To make this paper self-contained, we include the proof.\\

Given a point $(x_0,t_0)$, we define the characteristic curves
    \[x^\pm(s)\equiv x^\pm(s;\, x_0,\, t_0)\]
    by the solutions $x^\pm(s)$ of
    \[\frac{d x^\pm(s)}{d s}=\pm c\bigl(\theta(x^\pm(s),s)\bigr)\;\mbox{ with }\; x^\pm(t_0)=x_0.\]
 Note that $x^\pm(0)$ are the intersections of the characteristic curves $x^\pm(s)$ with the $x$-axis.    
      With the help of the following gradient variables
      \begin{align}\label{RS}\begin{split}
          S(x,t)= &\theta_t(x,t) - c(\theta(x,t)) \theta_x(x,t),\\
 R(x,t)=& \theta_t(x,t)+c(\theta(x,t)) \theta_x(x,t),
\end{split}
      \end{align}
we make the change of coordinates $(x,t)\mapsto (X,Y)$:
\begin{align}\label{charaXY}\begin{split}
X\equiv X(x,t):=&	\int_1^{x^-(0;\, x,\, t)} (1+ R^2(x',0))\,d x',\\
 Y\equiv Y(x,t):=&	\int_{x^+(0;\, x,\, t)}^1 (1+ S^2(x',0))\,d x'.
 \end{split}
\end{align}
 Note that, for any differentiable function $f$, one has
\begin{align}\label{forward}\begin{split}
f_t(X,Y) +c(\theta) f_x(X,Y)&=2c X_x f_X,\\ 
f_t(X,Y)-c(\theta) f_x(X,Y)&=-2c Y_x f_Y.
\end{split}
\end{align}

In order to complete the system, we introduce several variables: 
\beq\label{defwz}
w:=2\arctan R,\quad z:=2\arctan S,
\eeq
and 
\beq\label{defpq}
p:=\frac{1+R^2}{X_x},\quad q:=\frac{1+S^2}{-Y_x}.
\eeq


We write the system with respect to the new coordinates to obtain
\begin{align}\label{semisys11}
&\theta_X=\frac{\sin w}{4c(\theta)}p,\quad \theta_Y=\frac{\sin z}{4c(\theta)}q
\end{align}
and to close the system we derive the equations for $z,\omega,p$ and $q$
\begin{align}
&\textstyle z_X=p
\Big\{\frac{c'}{4c^2}(\cos^2\frac{w}{2}-\cos^2\frac{z}{2})+\frac{b(\theta)}{4c}(\sin w\cos^2\frac{z}{2}+\sin z\cos^2\frac{w}{2})-\frac{h(\theta)}{c}J\cos^2\frac{z}{2}\cos^2\frac{w}{2} \Big\},\\
&\textstyle w_Y=q
\Big\{\frac{c'}{4c^2}(\cos^2\frac{z}{2}-\cos^2\frac{w}{2})+\frac{b(\theta)}{4c}(\sin w\cos^2\frac{z}{2}+\sin z\cos^2\frac{w}{2})-\frac{h(\theta)}{c}J\cos^2\frac{z}{2}\cos^2\frac{w}{2}\Big\}, \\
&\textstyle p_Y=pq\Big\{\frac{c'}{8c^2}(\sin z-\sin w)
+\frac{b(\theta)}{2c}(\frac{1}{4}\sin w\sin z+\sin^2 \frac{w}{2}\cos^2\frac{z}{2})
-\frac{h(\theta)}{2c}J\sin w\cos^2\frac{z}{2}\Big\},\\\label{semisys55}
&\textstyle q_X=pq\Big\{\frac{c'}{8c^2}(\sin w-\sin z)
+\frac{b(\theta)}{2c}(\frac{1}{4}\sin w\sin z+\sin^2 \frac{z}{2}\cos^2\frac{w}{2})
-\frac{h(\theta)}{2c}J\sin z\cos^2\frac{w}{2}\Big\},
\end{align}

and

\beq
\left\{
\begin{array}{ll}\label{eqnxt}
\ds x_X=\frac{1}{2X_x}=\frac{1+\cos w}{4}p,\\
\\
\ds x_Y=\frac{1}{2Y_x}=-\frac{1+\cos z}{4} q,
\end{array}
\right.
\qquad 
\left\{
\begin{array}{ll}
\ds t_X=\frac{1}{2cX_t}=\frac{1+\cos w}{4c}p,\\
\\
\ds t_Y=-\frac{1}{2cY_t}=\frac{1+\cos z}{4c} q,
\end{array}
\right.
\eeq
where $b(\theta)=\frac{h^2(\theta)}{g(\theta)}-\gamma_1.$ See \cite{CHL20} for the derivation of \eqref{semisys11}-\eqref{eqnxt}.\\

Comparing this system with that in \cite{CHL20} for the special case $g=h=1$ and $\gamma_1=2$, one can see that the appearance of the bounded and smooth functions $g$ and $h$ does not create any difficulties applying the process in \cite{CHL20}.

\begin{prop}[Local Existence]\label{localwave}
There exists $ T>0$ sufficiently small such that system \eqref{semisys11}-\eqref{semisys55} has a solution $(\theta,z,\omega, p,q)(X,Y)$ defined on
\[\Omega_ T:=\{(X,Y)\in\R^2: d\big((X,Y),\gamma\big)\leq T\},\] where $\gamma$ is the curve in the $(X,Y)$ plane corresponding to the line $t=0$ on the $(x,t)$ plane and $d(\cdot,\gamma)$ is the distance between the curve and a point.
\end{prop}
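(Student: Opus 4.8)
The plan is to follow the change-of-coordinates scheme of \cite{BZ} and \cite{CHL20}: recast the $(X,Y)$-system \eqref{semisys11}--\eqref{eqnxt} as a closed system of integral equations on $\Omega_T$ with data prescribed on the curve $\gamma$, and solve it by a fixed-point argument that converges once $T$ is small. The data on $\gamma$ (the image of $\{t=0\}$ under \eqref{charaXY}) is $\theta=\theta_0$, $w=2\arctan R(\cdot,0)$, $z=2\arctan S(\cdot,0)$, $p=q=1$ (by \eqref{defpq}, since $X_x=1+R^2$ and $-Y_x=1+S^2$ at $t=0$), $t=0$, and $x$ the monotone reparametrization from \eqref{charaXY}, with $R(\cdot,0),S(\cdot,0)$ built from $\theta_0,\theta_1$ via \eqref{RS}. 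Since $z_X,q_X$ and $w_Y,p_Y$ are prescribed, I recover $z,q$ by integrating in $X$ from $\gamma$, $w,p$ by integrating in $Y$ from $\gamma$, and $\theta,x,t$ by integrating their gradients $(\theta_X,\theta_Y),(x_X,x_Y),(t_X,t_Y)$ along a monotone staircase path back to $\gamma$ (the requisite path-independence, i.e.\ that the so-defined functions satisfy all the equations, following from the $z,w,p,q$ equations exactly as in \cite{CHL20}). All coefficients on the right-hand sides --- $c(\theta)$, $c'(\theta)$, $h(\theta)$, $g(\theta)$, and $b(\theta)=\frac{h^2(\theta)}{g(\theta)}-\gamma_1$ --- are smooth and bounded on $\mathbb R$, with $c(\theta)\ge\sqrt{\min\{K_1,K_3\}}>0$ by \eqref{fgh} and $g(\theta)\ge\overline C>0$ by \eqref{positiveDamping}; and $J$ enters only as a fixed function with $\|J\|_{L^\infty}<\infty$.

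Because $J$ is merely bounded (and only $C^\alpha$) while it is evaluated at the unknown $(x(X,Y),t(X,Y))$, a single contraction on the full tuple is not available, so I would organize the iteration in two layers. Given a ``geometry'' $(x,t)\colon\Omega_T\to\mathbb R^2$ in the set $K$ of maps that are $O(T)$-close to the data $\bigl(x(X,Y),0\bigr)$ with a fixed Lipschitz bound, the composition $J\bigl(x(\cdot),t(\cdot)\bigr)$ is a fixed $L^\infty$ coefficient, and the sub-system for $(\theta,z,w,p,q)$ has right-hand sides that are Lipschitz in $(\theta,z,w,p,q)$ on bounded balls (products and smooth bounded functions of these variables, with $c$ bounded below); integrating over paths of length $O(T)$ back to $\gamma$ then makes the associated integral operator a self-map of a fixed ball and a contraction for $T$ small, giving a unique solution $(\theta,z,w,p,q)[x,t]$. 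Feeding $(w,z,p,q)$ into \eqref{eqnxt} and integrating from $\gamma$ defines $\Phi(x,t)=(\tilde x,\tilde t)$; for $T$ small, $\Phi\colon K\to K$, because $p,q$ stay bounded on $\Omega_T$ (Gronwall applied to $|p_Y|\le Cpq$, $|q_X|\le Cpq$, with $C=C(\|J\|_{L^\infty},\text{coeff.\ bounds})$), $c$ is bounded below, and the integration paths have length $O(T)$.

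To conclude, $K$ is convex, closed, equibounded and equicontinuous, hence compact in $C(\Omega_T)$ with the topology of uniform convergence on compact sets, and $\Phi$ is continuous there: the only parameter-dependence runs through $J\bigl(x(\cdot),t(\cdot)\bigr)$, which depends continuously on $(x,t)$ since $J\in C^\alpha\subset C^0$ and each point has bounded domain of dependence, and the inner contraction fixed point depends continuously on this coefficient. The Schauder--Tychonoff theorem then yields a fixed point of $\Phi$, which together with its associated $(\theta,z,w,p,q)$ is a solution of \eqref{semisys11}--\eqref{eqnxt} on $\Omega_T$; one also notes $p,q>0$ on $\Omega_T$ (immediate from $p=q=1$ on $\gamma$ and $|p_Y|,|q_X|\le Cpq$), so the change of variables is non-degenerate.

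The one genuinely non-routine point --- and the only place where the general model differs from the $g=h=1$ case of \cite{CHL20} --- is the coupling of the rough forcing $J$ to the iterated change of variables $(x,t)$: since $J$ is at best $C^\alpha$ (indeed only $L^\infty$ on the relevant set) and is composed with the unknown $(x,t)$, the Lipschitz estimate needed for a naive Banach iteration is lost, which is exactly why the scheme is arranged as a contraction in the wave variables nested inside a Schauder fixed point for the geometry; I expect the real bookkeeping to lie in the uniform upper and lower bounds on $p,q$ over $\Omega_T$, the $O(T)$-smallness of the integration paths, and the equicontinuity of $K$ up to spatial infinity. By contrast the newly present factors $g(\theta)$ and $h(\theta)$ are smooth, bounded, and --- for $g$, by \eqref{positiveDamping} --- bounded below, so they affect only the constants and leave the argument of \cite{CHL20} intact.
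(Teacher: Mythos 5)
Your proposal is correct and reaches the same conclusion, but it is organized differently from the paper's proof. The paper's Appendix~\ref{app-semilinear} applies a \emph{single} Schauder fixed point to the full tuple $(\theta,z,w,p,q)$, treating $x_p,t_p,x_m,t_m$ not as independent unknowns but as the explicit integral expressions \eqref{xpdef}--\eqref{tmdef} of $(z,w,p,q)$ inside the map definitions \eqref{hatz}--\eqref{hatq}; the ball $B_T$ is taken in $L^\infty\cap C^\alpha(\Omega_T)$ (compact in $C_0$), and Schauder is invoked precisely because $J(x_p,t_p)$ is only H\"older -- and hence not Lipschitz -- in the tuple. You instead nest a Banach contraction for $(\theta,z,w,p,q)$ (with the geometry $(x,t)$ frozen, so that $J(x(\cdot),t(\cdot))$ is a fixed $L^\infty$ coefficient and the RHS is genuinely Lipschitz on bounded balls) inside an outer Schauder--Tychonoff iteration for $(x,t)$. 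Both schemes hinge on exactly the same observation -- the H\"older-only regularity of $J$ composed with the unknown change of coordinates forces a compactness argument rather than a pure contraction -- but your version isolates that roughness in a single outer layer, buys uniqueness of the inner sub-solution for fixed geometry, and makes the needed compactness (Arzel\`a--Ascoli from the uniform Lipschitz bound on $(x,t)$) more transparent, at the cost of managing two coupled fixed points and the continuity of the inner solution operator with respect to the outer coefficient. The paper's version is more compact to state, at the cost of carrying the nested integrals $x_p,t_p,x_m,t_m$ explicitly through the continuity and self-map verification. Either way the a priori positivity and boundedness of $p,q$ on $\Omega_T$, the $O(T)$ path lengths, and the lower bounds $c\ge\sqrt{\min\{K_1,K_3\}}$ and $g\ge\overline C$ from \eqref{positiveDamping} are what make the self-map property hold for $T$ small, exactly as you note.
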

\begin{proof}
The proof is similar to that in \cite{CHL20} and is outlined in Appendix \ref{app-semilinear}.
\end{proof}

Now, to extend the solution globally, meaning to any arbitrary time $0<T<\infty,$ we need some a prior uniform bound on $p$ and $q$.

\begin{lemma}\label{lemma3.3}Consider any solution of \eqref{semisys11}-\eqref{semisys55} constructed in the local existence result with $t\in [0, T]$. Then, we have 
\beq\label{bdspq}
0<A_1\leq \max_{(X,Y)\in \Omega_ T }\left\{p(X,Y),\, q(X,Y)\right\}\leq A_2,
\eeq
for some constants $A_1$ and $A_2$ independent of $ T.$
\end{lemma}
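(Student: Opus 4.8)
The plan is to derive ODEs along characteristics for $p$ and $q$ and control them via a Gronwall-type argument combined with energy conservation for the wave part. First I would recall from the system \eqref{semisys11}-\eqref{semisys55} that $p$ satisfies an equation of the form $p_Y = pq\,\Phi_1$ and $q$ satisfies $q_X = pq\,\Phi_2$, where $\Phi_1,\Phi_2$ are the bracketed expressions involving $\frac{c'}{c^2}$, $\frac{b(\theta)}{c}$ and $\frac{h(\theta)}{c}J$. The key observation is that $\cos^2\frac{w}{2}$, $\cos^2\frac{z}{2}$, $\sin w$, $\sin z$ are all bounded by $1$, that $c(\theta)$ is bounded above and below away from zero (since $c^2(\theta)=K_1\cos^2\theta+K_3\sin^2\theta$ with $K_1,K_3>0$), that $c',b(\theta),h(\theta)$ are bounded by smoothness and boundedness of the material functions in $\theta$, and that $J\in L^\infty$ by hypothesis. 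Hence $|\Phi_1|,|\Phi_2|\le C$ for a constant $C$ depending only on the material parameters and $\|J\|_{L^\infty}$, but crucially not on $T$.

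Next I would exploit the structure of the transformation to bound integrals of $p$ and $q$. By \eqref{eqnxt}, $x_X=\frac{1+\cos w}{4}p$ and $x_Y=-\frac{1+\cos z}{4}q$, and similarly $t_X,t_Y$ are multiples of $p,q$; integrating along an $X$- or $Y$-characteristic the increments in $x$ and $t$ are finite (bounded by the time horizon and the spatial range traversed), which gives an a priori bound on $\int p\,dX$ and $\int q\,dY$ along characteristics — except that $1+\cos w$ or $1+\cos z$ may vanish, so this alone is not enough. The cleaner route is to use the conserved/decaying energy: from the definition \eqref{charaXY} of $X,Y$ in terms of $1+R^2$ and $1+S^2$ and the energy estimate $\mathcal{E}(t)\le \mathcal{E}(0)$ (which controls $\int(\theta_t^2+c^2\theta_x^2)\,dx$, hence $\int(R^2+S^2)\,dx$ up to constants), one shows that the total $X$-measure and $Y$-measure of any strip $\{0\le t\le T\}$ is bounded by $C(1+\mathcal{E}(0))$, uniformly in $T$. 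This is the standard mechanism from \cite{BZ,CHL20}: the change of variables absorbs the possibly-large gradients into bounded total measure.

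Then I would run the Gronwall argument: fix a point and integrate $p_Y=pq\Phi_1$ from the initial curve $\gamma$ along the $Y$-direction, and $q_X=pq\Phi_2$ along the $X$-direction. Setting $P(X,Y)=\sup$ of $p$ over the relevant region and likewise $Q$, one gets $P\le p|_\gamma + C\int Q\,p\,dY$ and symmetrically for $Q$; since $p|_\gamma$ and $q|_\gamma$ are bounded above and below by positive constants (determined by the initial data $R(\cdot,0),S(\cdot,0)$ through \eqref{defpq} and \eqref{charaXY}, giving $p|_\gamma=q|_\gamma=1$ along $\gamma$ by the normalization), and since the total integration length in $X$ and in $Y$ is bounded uniformly in $T$ by the energy bound, Gronwall's inequality yields $p,q\le A_2$ with $A_2$ independent of $T$. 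For the lower bound, since $p,q>0$ are continuous and satisfy linear ODEs in $\log p$, $\log q$ (namely $(\log p)_Y = q\Phi_1$ with $q\Phi_1$ bounded and integrated over bounded length), the same argument applied to $1/p$ and $1/q$ gives $p,q\ge A_1>0$. The main obstacle is establishing that the total $X$- and $Y$-extent of the time strip is bounded independently of $T$: this requires carefully combining the decreasing energy $\mathcal{E}(t)$ with the coordinate definitions and showing the dissipative terms do not spoil the bound — but this is essentially the argument of \cite{CHL20} for $g=h=1$, and as noted in the excerpt the bounded smooth factors $g,h$ do not obstruct it, while the extra forcing $-\frac{h(\theta)}{c}J$ contributes only bounded terms since $J\in L^\infty\cap L^2$.
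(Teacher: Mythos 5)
The paper gives no proof of this lemma at all --- it simply cites Lemma~6.2 of \cite{CHL20} as ``entirely similar.'' Your sketch is in the right spirit of the BZ/CHL20 machinery (rewrite the $p$-- and $q$--equations as Gronwall-type ODEs along the $(X,Y)$ characteristics, bound the coefficients $\Phi_1,\Phi_2$ uniformly using $0<c_1\le c\le c_2$, boundedness of $c',h,b,J$, and $p|_\gamma=q|_\gamma=1$), and you correctly identify that the whole weight of the argument rests on controlling the ``length'' over which one integrates. However, the step you gloss over --- ``the total $X$- and $Y$-measure of the strip $\{0\le t\le T\}$ is bounded by $C(1+\mathcal E(0))$, uniformly in $T$'' --- is exactly the nontrivial content of the cited lemma, and as stated it does not follow from the energy inequality. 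The energy controls $\int(1+R^2)\,dx$ on \emph{spatial slices} $\{t=\mathrm{const}\}$, hence $\int p\,dX$ along such slices; but the Gronwall integration for $p_Y=pq\Phi_1$ runs along a \emph{vertical line} $X=\mathrm{const}$, i.e.\ along an $x^-$ characteristic, on which a direct computation from \eqref{defpq} and $Y_t+cY_x=0$ gives $q\,dY=2c(1+S^2)\,dt$. So $\int_{\phi(X_0)}^{Y}q\,d\bar Y$ is an integral of $1+S^2$ along a characteristic, which is not an energy slice quantity, and even the ``$1$'' alone contributes $\approx T$. Closing this loop requires the more careful layered/iterative argument of \cite{BZ,CHL20}, not a one-shot Gronwall.

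A second, smaller point: the inequality you write, $P\le p|_\gamma+C\int Q\,p\,dY$ with $P,Q$ suprema, is circular as stated (the unknown $p$ appears both inside the integral and in the quantity being bounded). The clean formulation is the one you use for the lower bound, $(\log p)_Y=q\Phi_1$ and $(\log q)_X=p\Phi_2$, which you should use for the upper bound too; the coupled system for $\log p,\log q$ then needs a genuine two-variable Gronwall in the $(X,Y)$ web, again the heart of \cite{CHL20}'s Lemma~6.2. In short: the strategy and the identification of the key difficulty are correct, but the proposal defers precisely the hard step to the reference --- which, to be fair, is also what the paper does.
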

\begin{proof}
We skip the proof since it is entirely similar to that for Lemma 6.2 in \cite{CHL20}.
\end{proof}
Now we need to transfer the solution back to the original coordinate system $(x,t)$ using \eqref{eqnxt}. We note that in general, due to the lack of enough regularity of the solution $\theta,$ we might well lose the  uniqueness of the characteristic curves. So $x(X,Y),t(X,Y)$ might not be well defined. Instead, we will show that $\theta(x,t)$ is always well defined. In fact, a possible scenario in this case is that we might have the following
    \[X_1:=X(x_1,t_1)=X(x_2,t_2)=:X_2\] and \[Y_1:=Y(x_1,t_1)=Y(x_2,t_2)=:Y_2.\] But, one can show that $\theta(X_1,Y_1)=\theta(X_2,Y_2)$ and hence $\theta$ remains well defined (\cite{BZ}).\\
    

Finally, we can prove the existence of weak solution in $(x,t)$ coordinates.

\begin{prop}\label{propth} Given a function  $J(x,t)\in (L^2\cap L^\infty\cap C^\alpha)([0,T],\R)$ and initial data $\theta_0(x)$ and $\theta_1(x)$ as in \eqref{data}. Then
\eqref{waveAx} has a weak solution $\theta(x,t)=\theta^J(x,t)$ in the following sense
\begin{align}
    \int_0^T\int_\R\theta_t\phi_t-\big[\gamma_1-\frac{h^2(\theta)}{g(\theta)}\big]\theta_t\phi\,dx\,dt=\int_0^T\int_\R (c(\theta)\theta_x)(c(\theta)\phi)_x+h(\theta)J\phi\,dx\,dt,
\end{align}
for any function $\phi\in H^1_0(\R\times (0,T))$. Moreover, we have
\[\theta(x,t)\in C^{1/2}([0,T],\R),\quad \theta_x(x,t),\theta_t(x,t)\in L^\infty([0,T],L^2(\R)).\]
The initial data is satisfied in the sense that $\displaystyle\theta(x,0)=\theta_0(x)$ point-wise and $\displaystyle \theta_t(x,0)=\theta_1(x)$ in $L^p_{loc}$ for $p=[1,2).$
\end{prop}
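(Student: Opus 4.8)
The plan is to follow the change-of-coordinates framework of \cite{BZ,CHL20} exactly as set up above: first produce a global smooth solution of the semilinear system \eqref{semisys11}--\eqref{semisys55} on the whole strip $0\le t\le T$ in the $(X,Y)$ plane, and then pull it back to obtain a weak solution of \eqref{waveAx} in the $(x,t)$ variables. For Step 1 (global existence in $(X,Y)$), Proposition \ref{localwave} gives a solution on a neighborhood $\Omega_{T}$ of $\gamma$ for small $T$, and I would extend it to arbitrary $T<\infty$ by the standard continuation argument. The only quantities that could possibly blow up in finite $(X,Y)$-time are $p$ and $q$: the variables $w,z\in(-\pi,\pi)$ are automatically bounded, $c(\theta)$ stays between positive bounds since $c^2(\theta)=K_1\cos^2\theta+K_3\sin^2\theta\in[\min(K_1,K_3),\max(K_1,K_3)]$, and by \eqref{semisys11} together with the bounds on $p,q$ the angle $\theta$ grows at most linearly. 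Lemma \ref{lemma3.3} supplies exactly the needed a priori bound $A_1\le p,q\le A_2$ with constants independent of $T$, so the local solution extends to all of $\Omega_{T}$ for every $T$. The new ingredients relative to the variational wave equation — the damping coefficient $\gamma_1-h^2(\theta)/g(\theta)$ and the forcing $-h(\theta)J$ — contribute only bounded, Lipschitz-in-their-arguments lower order terms to the right-hand sides of \eqref{semisys11}--\eqref{semisys55}, with $\|J\|_{L^\infty}$ controlling the $J$-terms, so this step goes through verbatim; the positivity \eqref{positiveDamping} is what prevents the $p,q$-estimates from degenerating, just as in \cite{CHL20}.

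For Step 2 (pull-back and definition of $\theta(x,t)$), I would use \eqref{eqnxt} to define the map $(X,Y)\mapsto(x(X,Y),t(X,Y))$. Since $\theta$ need not be $C^1$, characteristics need not be unique and this map need not be injective; following \cite{BZ} (as already flagged in the text) one checks that whenever $(X_1,Y_1)=(X_2,Y_2)$ is the image of two base points $(x_1,t_1)$, $(x_2,t_2)$, one still has $\theta(X_1,Y_1)=\theta(X_2,Y_2)$, so $\theta$ descends to a well-defined function of $(x,t)$ on $\R\times[0,T]$. For the regularity, note from \eqref{semisys11} and the bounds $0<A_1\le p,q\le A_2$, $|\sin w|,|\sin z|\le 1$, $c(\theta)\ge c_0>0$ that $\theta$ is Lipschitz in $(X,Y)$; combined with the control of $x_X,x_Y,t_X,t_Y$ in \eqref{eqnxt}, which forces $t$ to be comparable to the square root of the increments of $X$ (resp.\ $Y$) precisely where $1+\cos w$ (resp.\ $1+\cos z$) degenerates, one obtains a Hölder-$1/2$ modulus of continuity for $\theta$ in the original variables, i.e. $\theta\in C^{1/2}(\R\times[0,T])$. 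This is the same mechanism that produces the $1/2$-Hölder exponent for variational wave equations, and the bounded smooth factors $g,h$ and the forcing $J\in C^\alpha\cap L^\infty$ do not alter it.

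For Step 3 (weak formulation, initial data, energy bound), once $\theta$ is known the gradient variables $R,S$, and hence $\theta_t=\tfrac12(R+S)$ and $\theta_x=\tfrac1{2c}(R-S)$, are recovered a.e.\ through \eqref{RS}, \eqref{defwz}. To verify the weak identity I would change variables in the integrals $\int_0^T\!\!\int_\R(\cdots)\,dx\,dt$ back to $(X,Y)$ using the Jacobian read off from \eqref{eqnxt}, integrate by parts in $(X,Y)$ using \eqref{semisys11}--\eqref{semisys55}, and change back; the set where the change of variables is singular has measure zero in $\R\times[0,T]$ and contributes nothing. The initial conditions follow from \eqref{charaXY}: along $\gamma$ one has $\theta=\theta_0$ pointwise, while $\theta_t=\theta_1$ is attained only in $L^p_{loc}$ for $p\in[1,2)$ because $\theta_t$ is merely $L^2$, exactly as in \cite{CHL20}. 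Finally, for $\theta\in L^\infty([0,T],H^1(\R))$ I would use the energy identity, which formally reads
\[
\frac{d}{dt}\int_\R\big(\theta_t^2+c^2(\theta)\theta_x^2\big)\,dx=-2\int_\R\Big(\gamma_1-\tfrac{h^2(\theta)}{g(\theta)}\Big)\theta_t^2\,dx-2\int_\R h(\theta)J\,\theta_t\,dx,
\]
where the first term on the right is $\le 0$ by \eqref{positiveDamping} and the second is bounded by $C\|J\|_{L^2}^2+\epsilon\|\theta_t\|_{L^2}^2$; Grönwall's inequality, together with an analogous estimate for $\int_\R\theta^2\,dx$, then gives a uniform $H^1$ bound on $[0,T]$. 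Rigorously this is read off from the $(X,Y)$ representation and approximation rather than by differentiating directly.

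The most delicate point, and the one I expect to be the main obstacle, is making Step 2 fully rigorous: controlling the set on which characteristics concentrate — where $w$ or $z$ approaches $\pm\pi$, equivalently where $x_X=\tfrac{1+\cos w}{4}p$ or $x_Y=-\tfrac{1+\cos z}{4}q$ vanishes — showing that this set carries no mass for the $dx\,dt$ integrals, and extracting the $C^{1/2}$ bound uniformly up to these degeneracies. Everything else is a careful but routine transcription of the argument in \cite{CHL20}, once one observes, as the text already notes, that the bounded smooth functions $g,h$ and the $C^\alpha\cap L^2\cap L^\infty$ forcing $J$ enter only as harmless lower order perturbations of the variational wave system.
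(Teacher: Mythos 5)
Your proposal follows essentially the same route as the paper: local existence of the semilinear $(X,Y)$-system via the fixed point argument of Proposition \ref{localwave}, global extension using the a priori bounds on $p,q$ from Lemma \ref{lemma3.3}, pull-back to $(x,t)$ with the \cite{BZ}-style well-definedness check, verification of the weak formulation by changing variables and integrating by parts, and the energy/initial-data arguments as in \cite{CHL20}. One small remark: the paper's $C^{1/2}$ bound is obtained cleanly by showing $\int_0^t(\theta_t\pm c\theta_x)^2\,dt\le C$ in the new coordinates (using $\theta_X=\frac{\sin w}{4c}p$, $t_X=\frac{1+\cos w}{4c}p$, hence $|\theta_X|\lesssim\sqrt{t_X}$) followed by Cauchy--Schwarz/Sobolev, whereas your geometric phrasing of ``$t$ comparable to the square root of increments of $X$'' is a looser description of the same mechanism; you correctly identify that it is the standard variational-wave argument, but the quadratic vanishing of $\sin w$ versus $1+\cos w$ is what actually yields the exponent $1/2$.
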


\begin{proof}

We show that the solution constructed as a fixed point of the map in Appendix \ref{app-semilinear} satisfies the weak formulation,
\begin{align*}
    \int_0^T\int_\R\theta_t\phi_t+b(\theta)\theta_t\phi\,dx\,dt=\int_0^T\int_\R (c(\theta)\theta_x)(c(\theta)\phi)_x+hJ\phi\,dx\,dt,
\end{align*}
where $b(\theta)=\frac{h^2(\theta)}{g(\theta)}-\gamma_1.$
The calculations are similar to the one in \cite{BZ}. Rewrite the above equation in terms of the variables $R$ and $S$ defined in \eqref{RS} to get
\begin{align*}
  \int_0^T\int_\R\big[\phi_t-c\phi_x\big]R+\big[\phi_t+c\phi_x\big]S+c'\theta_x(S-R)\phi+b(\theta)(R+S)\phi-2hJ\phi\,dx\,dt=0.
\end{align*}
Using \eqref{forward} and $\displaystyle dxdt=\frac{pq}{2c(1+R^2)(1+S^2)}dXdY,$
we obtain
\begin{align*}
 &   \int_0^T\int_\R\bigg\{\big(-2cY_x\phi_Y\big)R+\big(2cX_x\phi_X\big)S+c'\big[\theta_XX_x+\theta_YY_x\big](S-R)\phi\\
    &\qquad -\big[\gamma_1-\frac{h^2(\theta)}{g(\theta)}\big](R+S)\phi-2hJ\phi\bigg\}\frac{pq}{2c(1+R^2)(1+S^2)}dXdY=0.
\end{align*}
Apply (\ref{defpq}) and (\ref{semisys11}) to get
\begin{align*}
  &  \int_0^T\int_\R\frac{R}{1+R^2}p\phi_Y+\frac{S}{1+S^2}q\phi_X+\frac{c'pq}{8c^2}\bigg(\frac{\sin\omega}{1+S^2}-\frac{\sin z}{1+R^2}\bigg)(S-R)\phi\\
    &\qquad -\frac{pq}{2c}\big[\gamma_1-\frac{h^2(\theta)}{g(\theta)}\big]\frac{R+S}{(1+R^2)(1+S^2)}\phi-\frac{pqhJ}{c(1+R^2)(1+S^2)}\phi\,dX\,dY=0.
\end{align*}
Noticing $\displaystyle \frac{R}{1+R^2}=\frac{\sin \omega}{2}\;\mbox{and}\; \frac{S}{1+S^2}=\frac{\sin z}{2}$ from (\ref{defwz}), we get
\begin{align*}
  &  \int_0^T\int_\R\frac{\sin\omega}{2}p\phi_Y+\frac{\sin z}{2}q\phi_X\\
 & \qquad  +\frac{c'pq}{8c^2}\big(\sin\omega\sin z-\sin\omega \cos^2\frac{z}{2}\tan \frac{\omega}{2}-\sin z \cos^2\frac{\omega}{2}\tan \frac{z}{2}\big)\phi\\
    &\qquad -\frac{pq}{2c}\big[\gamma_1-\frac{h^2(\theta)}{g(\theta)}\big]\big(\cos\frac{\omega}{2}\cos^2\frac{z}{2}\sin \frac{\omega}{2}+\cos^2\frac{\omega}{2}\cos\frac{z}{2}\sin \frac{z}{2}\big)\phi\\
    &\qquad -\frac{pq}{c}h\cos^2\frac{\omega}{2}\cos^2\frac{z}{2}J\phi\,dX\,dY=0.
\end{align*}

Integrating the first two terms by parts and using the equations for $p,q,\omega$ and $z$, we prove that the weak formulation \eqref{thetaweak1} is satisfied.\\

It remains to show the H\"older continuity of $\theta(x,t)$ with exponent $1/2.$ This follows from the fact that
\begin{align*}
    \int_0^t[\theta_t\pm c(\theta)\theta_x]^2\,dt\leq C
\end{align*}
for any $t\in [0,T],$ where the constant $C$ depends only on $t.$ Using the change of coordinates \eqref{forward} (See Appendix \ref{app-semilinear}), we obtain
\begin{align*}
     \int_0^t[\theta_t+c(\theta)\theta_x]^2\,dt&=
    \int_{X_0}^{X_t}(2cX_x\theta_X)^2\frac{1}{2X_t}\,dX\\
    &=\int_{X_0}^{X_t}\big(\frac{4c}{p(1+\cos \omega}\frac{\sin \omega}{4c}p\big)^2\frac{1+\cos \omega}{4c}p\,dX\\
    &=\int_{X_0}^{X_t}\frac{\sin^2\omega}{1+\cos \omega}p\,dX\leq C.
\end{align*}
Similar calculations for $\theta_t-c(\theta)\theta_x$ gives a similar bound. The two bounds together imply the square integrability of $\theta_t$ and $\theta_x$ hence Sobolev embedding implies the H\"older continuity of $\theta(x,t)$ with exponent $1/2.$\\

Finally we show a bound for the energy $E(t)$ defined as
\begin{align}
  E(t):=\frac{1}{2}\int_\R\theta_t^2+c(\theta)^2\theta_x^2\,dx.
\end{align}
For any fixed $0<T<\infty,$ let $\Omega_T:=\R\times [0,T].$ For any function $J(x,t)\in L^\infty\cap L^2\cap C^\alpha(\Omega_T),$ the energy of a weak solution $\theta$ of \eqref{waveAx} satisfies a prior bound. More precisely,
the energy satisfies the following bound
\begin{align}\label{energybound}
    E(t)\leq C_E,
\end{align}
for some $C_E$ depending on $E(0)$ and $J.$ The proof is similar to the one in \cite{CHL20} and is provided in Appendix \ref{Appenergy}.

The estimate obtained, 
\beq\label{ET}
\frac{1}{2}\max_{0\leq t
\leq T}E(t)\leq E(0)+C\int_0^T\int_{-\infty}^\infty|J|^2\,dxdt
\eeq
for some constant $C$. This implies that $\theta_t(\cdot, t)$ and $\theta_x(\cdot, t)$ are both square integrable functions in $x$, so are $R$ and $S$.

The proof that the solution satisfies the initial condition follows by the same argument in \cite[Theoerem 1]{BZ}. We omit the proof here.
\end{proof}

\section{A brief review of parabolic differential operators with non-constant H\"older coefficients.}\label{HK}
In this section we summarise relevant results from first chapter in \cite{Fri} in terms of the specific form of equations appeared in this paper for direct future usage.

 Let the differential operator $\mathcal{L}$ be defined as
\[\mathcal{L}:=\partial_t-g(\theta)\partial_{xx}+\gamma_1,\]
where $g(\theta)$ is a strictly positive smooth function and $\theta$ is H\"older continuous with exponent $1/2$ with respect to $x$ and $t,$ and $\gamma_1$ is a positive constant. Consider the  differential equation
\begin{align}\label{diff}
    \mathcal{L}\,\omega&=0.
\end{align}


\par

Note that, for any fixed $(\xi,\tau)$,  the heat kernel of the operator
 \[\mathcal{L}_0^{\xi,\tau}:=\partial_t-g(\theta(\xi,\tau))\partial_{xx}\]
 is
\begin{align}\label{heat}
    H^{\xi,\tau}(x-\xi,t-\tau)=\frac{1}{2\sqrt{g(\theta(\xi,\tau))}\sqrt{t-\tau}}e^{-\frac{(x-\xi)^2}{4g(\theta(\xi,\tau))(t-\tau)}}.
\end{align}

\begin{remark}
    The superscripts in $\mathcal{L}_0^{\xi,\tau}$ and $H^{\xi,\tau}(x-\xi,t-\tau)$ indicate the dependence  on $(\xi,\tau)$ via $g(\theta(\xi,\tau)).$
\end{remark}

 Several results established in Chapter 1 of  \cite{Fri} will be recalled and used. These include Theorems 8-11, displays (4.15), (6.12) and (6.13) in \cite{Fri}.
\begin{prop}
There exists a function $\Phi$ such that $\Gamma$ given by
\begin{align}\label{GGGamma}
\Gamma(x,t,\xi,\tau)&=H^{\xi,\tau}(x-\xi,t-\tau)
+\int_\tau^t\int_\mathbb{R}H^{y,s}(x-y,t-s)\Phi(y,s;\xi,\tau)\,dy\,ds
\end{align}
 satisfies \eqref{diff}. Moreover, one has
    \begin{align}\label{phiest}
|\Phi(y,s;\xi,\tau)|\leq \frac{const}{(s-\tau)^{5/4}}e^{\frac{-d(y-\xi)^2}{4(s-\tau)}},
\end{align}
where $d$ is a constant depending on $\|g\|_{L^\infty(\R)}$ and $\gamma_1.$
\end{prop}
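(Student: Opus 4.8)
The statement is essentially the classical parametrix construction of Levi, adapted to the specific operator $\mathcal{L}=\partial_t-g(\theta)\partial_{xx}+\gamma_1$ with $g(\theta)$ only H\"older-$1/2$ in $(x,t)$. The plan is to build the fundamental solution $\Gamma$ by perturbing off the frozen-coefficient heat kernel $H^{\xi,\tau}$, with the correction term $\Phi$ arising as the solution of a Volterra-type integral equation. First I would write the ansatz \eqref{GGGamma} and formally apply $\mathcal{L}$ to it. Since $\mathcal{L}_0^{\xi,\tau}H^{\xi,\tau}(x-\xi,t-\tau)=0$ by construction, applying the full operator $\mathcal{L}$ (whose coefficient is $g(\theta(x,t))$, not the frozen $g(\theta(\xi,\tau))$) produces a defect $(\mathcal{L}-\mathcal{L}_0^{\xi,\tau})H^{\xi,\tau} = (g(\theta(\xi,\tau))-g(\theta(x,t)))\partial_{xx}H^{\xi,\tau} + \gamma_1 H^{\xi,\tau}$. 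Here the H\"older continuity of $\theta$, together with the Gaussian bound on $\partial_{xx}H$, shows this defect is of order $(t-\tau)^{-1}\cdot(t-\tau)^{1/4}\cdot(\text{Gaussian}) = (t-\tau)^{-3/4}(\text{Gaussian})$, i.e. integrable in time — this is the crucial gain that makes the whole scheme work.

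Next I would impose that the volume-potential term in \eqref{GGGamma} cancels this defect, which forces $\Phi$ to satisfy an integral equation of the form $\Phi(y,s;\xi,\tau) = \Psi(y,s;\xi,\tau) + \int_\tau^s\int_\R \Psi(y,s;z,\sigma)\Phi(z,\sigma;\xi,\tau)\,dz\,d\sigma$, where $\Psi = -(\mathcal{L}-\mathcal{L}_0^{z,\sigma})H^{z,\sigma}$ is the defect kernel, which by the estimate above satisfies $|\Psi(y,s;\xi,\tau)|\le C(s-\tau)^{-3/4}e^{-d(y-\xi)^2/4(s-\tau)}$. I would then solve this by the method of successive approximations $\Phi=\sum_{k\ge1}\Psi^{(k)}$, where $\Psi^{(1)}=\Psi$ and $\Psi^{(k+1)}=\int\!\!\int \Psi\,\Psi^{(k)}$. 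The key computational lemma is the convolution/iteration estimate: convolving two kernels of Gaussian type with time-singularities $(s-\tau)^{-3/4}$ produces, via the Beta-function identity $\int_\tau^s (s-\sigma)^{a-1}(\sigma-\tau)^{b-1}d\sigma = B(a,b)(s-\tau)^{a+b-1}$ and Gaussian semigroup convolution, a kernel with an improved time exponent and a $1/\Gamma$-type factorial decay in the constants. Summing the series then gives convergence and the bound $|\Phi|\le C(s-\tau)^{-3/4+?}e^{-d(y-\xi)^2/4(s-\tau)}$; one should be a little careful that the stated exponent in \eqref{phiest} is $5/4$ rather than something smaller — I would double-check whether the paper's $\Phi$ absorbs an extra factor of $(s-\tau)^{-1/2}$ from a spatial-derivative normalization, or whether \eqref{phiest} is meant as the crude bound on the very first iterate before summation; in Friedman's Chapter 1 the analogous kernel indeed has the $(s-\tau)^{-3/4}$-type bound after summation, so the proof I write would derive that and then note \eqref{phiest} follows a fortiori on the bounded time interval (absorbing powers of $(s-\tau)$ into the constant, or citing the precise display in \cite{Fri}).

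Finally, with $\Phi$ constructed and estimated, I would verify that $\Gamma$ as defined in \eqref{GGGamma} genuinely satisfies $\mathcal{L}\Gamma=0$ in the appropriate (pointwise, for $t>\tau$) sense: this requires justifying differentiation under the integral sign in the volume potential, which is where the local H\"older continuity of $g(\theta)$ is used a second time (the standard fact that $\partial_{xx}$ of a Newtonian/heat volume potential with H\"older density is controlled, cf. Friedman Ch.\ 1 Thm.\ 3), and then observing that the defect term and the $\Phi$-integral term cancel by construction of the integral equation. Since the paper only needs the \emph{existence} of such a $\Phi$ together with the decay bound \eqref{phiest} — everything else being quoted from \cite{Fri} — I would keep this verification brief and lean on the cited Theorems 8--11 of \cite{Fri}, merely indicating that the coefficient here, $g(\theta(x,t))$ with $\theta\in C^{1/2}$, meets the H\"older hypotheses required there with exponent $1/2$. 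The main obstacle, and the only place where genuine work (as opposed to citation) is needed, is the iteration estimate showing the Levi series converges with Gaussian tails — i.e. tracking how the time-singularity exponent and the Gaussian constant $d$ evolve under repeated space-time convolution — and making sure the dependence of $d$ on $\|g\|_{L^\infty}$ and $\gamma_1$ (and not on finer norms of $\theta$) is honest.
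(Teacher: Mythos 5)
The paper gives no proof of this proposition: it is a direct recall of Friedman's parametrix construction, and the text explicitly cites Chapter~1 of \cite{Fri} (Theorems 8--11 and display (4.15)). Your proposal is a correct sketch of exactly that Levi/Friedman construction --- the frozen-coefficient parametrix $H^{\xi,\tau}$, the defect $(\mathcal{L}-\mathcal{L}_0^{\xi,\tau})H^{\xi,\tau}$, the Volterra integral equation for $\Phi$, and the iterated Gaussian--Beta convolution estimates --- so in substance you match the source the paper leans on, and your account is pitched at the right level for what is being asserted.

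The one place you talk yourself into unnecessary doubt is the exponent $5/4$ in \eqref{phiest}. You estimate the defect as ``$(t-\tau)^{-1}\cdot(t-\tau)^{1/4}\cdot(\text{Gaussian})=(t-\tau)^{-3/4}(\text{Gaussian})$'' and then worry that Friedman's bound should be $3/4$, not $5/4$. The resolution is that you have conflated two normalizations. In one space dimension the parametrix itself carries a factor $(t-\tau)^{-1/2}$, so $|\partial_{xx}H^{\xi,\tau}|\lesssim (t-\tau)^{-1}H^{\xi,\tau}\approx (t-\tau)^{-3/2}e^{-d(x-\xi)^2/4(t-\tau)}$, and the H\"older gain (spatial exponent $1/2$, converted to $(t-\tau)^{1/4}$ by absorbing $|x-\xi|^{1/2}$ into the Gaussian) brings this to $(t-\tau)^{-5/4}e^{-d(x-\xi)^2/4(t-\tau)}$. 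That is, your ``$(t-\tau)^{-3/4}$ times Gaussian'' is correct if ``Gaussian'' means the normalized heat kernel $H$, and it equals $(t-\tau)^{-5/4}$ times the pure exponential, which is precisely what \eqref{phiest} states. This is also exactly Friedman's bound: in his notation the correction kernel satisfies $|\Phi|\lesssim (t-\tau)^{-(n+2-\alpha)/2}e^{-d|x-\xi|^2/(t-\tau)}$, and with $n=1$, $\alpha=1/2$ (the H\"older exponent inherited from $\theta\in C^{1/2}$) this gives $(n+2-\alpha)/2=5/4$. So there is no ``a fortiori on a bounded interval'' step needed, and no discrepancy to reconcile: \eqref{phiest} is the sharp first-iterate bound, preserved under summation of the Levi series. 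With that clarification your outline is a faithful rendering of the intended (cited) proof.
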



Set $\displaystyle \Omega_T:=\R\times (0,T]$ for some $T>0$ and consider the Cauchy problem
\begin{align}\label{chy}
    &\mathcal{L}\,\omega(x,t)=f(x,t),\quad \text{on}\quad\Omega_T\\\label{chyinit}
    &\omega(x,0)=\phi(x),\quad \text{at} \quad t=0
\end{align}
where $f$ is H\"older continuous on $ \overline\Omega_T$ and $\phi$ is continuous on $\R$. It is shown in \cite[Theorem 12]{Fri} that the function
 \begin{align}
     \omega(x,t)=\int_R\Gamma(x,t,\xi,0)\phi(\xi)\,d\xi+\int_0^t\int_R\Gamma(x,t;\xi.\tau)f(\xi,\tau)\,d\xi\,d\tau
 \end{align}
is a {\em classical} solution of the Cauchy problem \eqref{chy} and \eqref{chyinit}. Moreover,
\begin{align}\label{Gammaest}
    |\Gamma(x,t;\xi,\tau)|\lesssim \frac{1}{\sqrt{t-\tau}}e^{-\frac{d(x-\xi)^2}{4(t-\tau)}}\approx H(x-\xi,t-\tau),
\end{align}
\begin{align}\label{Gamma_xest}
    |\Gamma_x(x,t;\xi,\tau)|\lesssim\frac{1}{{t-\tau}}e^{-\frac{d(x-\xi)^2}{4(t-\tau)}}\approx\frac{1}{\sqrt{t-\tau}}H(x-\xi,t-\tau),
\end{align}
where $d$ is a constant depending on $g$ and $\lesssim$ and $\approx$ mean $\leq$ up to a constant and $=$ up to a constant, respectively. In both cases the constant is uniform in $(x,t,\xi,\tau).$  \\


We now apply the above results from \cite{Fri} to get a preliminary result for later usage.
Set
\begin{align*}
M_f(x,t)&:=\int_0^t\int_\R\Gamma(x,t;\xi,\tau)f(\xi,\tau)\,d\xi\,d\tau,\\
    M_{f,x}(x,t)&:=\int_0^t\int_\R\Gamma_x(x,t;\xi,\tau)f(\xi,\tau)\,d\xi\,d\tau.  
\end{align*}
\begin{prop}\label{GammaL2} 
If $f(x,t)\in {L^\infty((0,T),L^2(\R))},$
then
\begin{align}\label{LinfMf}
\|M_f\|_{L^\infty(\Omega_T)}\lesssim T^{1/4}\|f\|_{L^2(\Omega_T)},\;&\; \|M_{f,x}\|_{L^\infty(\Omega_T)}\lesssim  T^{1/4}\|f\|_{L^\infty((0,T),L^2(\R))},\\\label{L2Mf}
\|M_f\|_{L^2(\Omega_T)}\lesssim T\|f\|_{L^2(\Omega_T)},\;&\mbox{ and }\; \|M_{f,x}\|_{L^2(\Omega_T)}\lesssim \sqrt{T}\|f\|_{L^2(\Omega_T)}.
\end{align}
\end{prop}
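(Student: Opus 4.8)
The plan is to derive all four bounds from the Gaussian pointwise estimates \eqref{Gammaest} and \eqref{Gamma_xest} by reducing each norm to an integral of a heat kernel against $f$ and then applying Young's convolution inequality (in the $x$-variable) together with an elementary estimate of the resulting time integral. Write $K_\alpha(x,t):=t^{-\alpha}e^{-d x^2/(4t)}$, so that \eqref{Gammaest} gives $|\Gamma(x,t;\xi,\tau)|\lesssim K_{1/2}(x-\xi,t-\tau)$ and \eqref{Gamma_xest} gives $|\Gamma_x(x,t;\xi,\tau)|\lesssim K_{1}(x-\xi,t-\tau)$. The two Gaussian facts I will use repeatedly are $\|K_\alpha(\cdot,s)\|_{L^1(\R)}\lesssim s^{1/2-\alpha}$ and $\|K_\alpha(\cdot,s)\|_{L^2(\R)}\lesssim s^{1/4-\alpha}$, both obtained by the substitution $x=\sqrt{s}\,y$.

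First I would prove the $L^\infty$ bounds in \eqref{LinfMf}. For $M_f$, apply the Cauchy--Schwarz inequality in $(\xi,\tau)$ to
\[
|M_f(x,t)|\lesssim \int_0^t\int_\R K_{1/2}(x-\xi,t-\tau)\,|f(\xi,\tau)|\,d\xi\,d\tau,
\]
splitting $K_{1/2}=K_{1/2}^{1/2}\cdot K_{1/2}^{1/2}$; one factor pairs with $f$ to give $\|f\|_{L^2(\Omega_T)}$, and the other integrates to $\int_0^t\!\int_\R K_{1/2}(x-\xi,t-\tau)\,d\xi\,d\tau=\int_0^t (t-\tau)^0\,d\tau\cdot(\text{const})\lesssim T$, but a slightly more careful split (using $K_{1/2}^{p}$ with $p$ chosen so the remaining time integral is $\int_0^t(t-\tau)^{-1/2}\,d\tau\lesssim T^{1/2}$ while the $x$-integral stays finite) produces the sharp exponent $T^{1/4}$. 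For $M_{f,x}$, bound $|M_{f,x}(x,t)|\lesssim\int_0^t\!\int_\R K_1(x-\xi,t-\tau)|f(\xi,\tau)|\,d\xi\,d\tau$, apply Cauchy--Schwarz only in $\xi$ to get $\int_0^t\|K_1(\cdot,t-\tau)\|_{L^2}\|f(\cdot,\tau)\|_{L^2}\,d\tau\lesssim\|f\|_{L^\infty((0,T),L^2)}\int_0^t(t-\tau)^{-3/4}\,d\tau\lesssim T^{1/4}\|f\|_{L^\infty((0,T),L^2)}$; this explains why the second estimate in \eqref{LinfMf} must be stated with the $L^\infty_tL^2_x$ norm rather than $L^2(\Omega_T)$, since $\int_0^t(t-\tau)^{-3/4}\,d\tau$ converges but $(t-\tau)^{-3/2}$ after a further Cauchy--Schwarz in $\tau$ would not.

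For the $L^2$ bounds in \eqref{L2Mf}, I would view $M_f(\cdot,t)$ as a superposition over $\tau$ of convolutions $K_{1/2}(\cdot,t-\tau)\ast f(\cdot,\tau)$, apply Minkowski's integral inequality in $\tau$, then Young's inequality $\|K_{1/2}(\cdot,s)\ast f(\cdot,\tau)\|_{L^2_x}\le\|K_{1/2}(\cdot,s)\|_{L^1_x}\|f(\cdot,\tau)\|_{L^2_x}\lesssim\|f(\cdot,\tau)\|_{L^2_x}$ (here $s=t-\tau$ and the $L^1$ norm is $O(1)$), giving $\|M_f(\cdot,t)\|_{L^2_x}\lesssim\int_0^t\|f(\cdot,\tau)\|_{L^2_x}\,d\tau\lesssim T^{1/2}\|f\|_{L^2(\Omega_T)}$; then taking $L^2$ in $t$ over $(0,T)$ costs another $T^{1/2}$ and yields the factor $T$. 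The same scheme for $M_{f,x}$ uses $\|K_1(\cdot,s)\|_{L^1_x}\lesssim s^{-1/2}$, so $\|M_{f,x}(\cdot,t)\|_{L^2_x}\lesssim\int_0^t(t-\tau)^{-1/2}\|f(\cdot,\tau)\|_{L^2_x}\,d\tau$; this is a fractional-integral (convolution with $s^{-1/2}\in L^1_{\mathrm{loc}}$, norm $\lesssim T^{1/2}$) so by Young in the $t$-variable $\|M_{f,x}\|_{L^2(\Omega_T)}\lesssim T^{1/2}\|f\|_{L^2(\Omega_T)}=\sqrt{T}\|f\|_{L^2(\Omega_T)}$. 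The main obstacle is purely bookkeeping: tracking the exponents through each Hölder/Young split so that the time-singularities $(t-\tau)^{-1/2}$ and $(t-\tau)^{-3/4}$ stay integrable and the powers of $T$ come out exactly as claimed; in particular one must resist pairing $f$ with the full kernel when that would force a non-integrable time weight, which is precisely why the two $M_{f,x}$ estimates are stated in the two different norms they are.
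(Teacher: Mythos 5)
Your proposal is correct and follows essentially the same route as the paper: bound $\Gamma$ and $\Gamma_x$ pointwise by Gaussian kernels via \eqref{Gammaest}--\eqref{Gamma_xest}, then apply Cauchy--Schwarz for the $L^\infty$ bounds and Young's convolution inequality for the $L^2$ bounds. The only cosmetic differences are that for the $L^\infty$ bound on $M_{f,x}$ you apply Cauchy--Schwarz in $\xi$ alone and then integrate the resulting $(t-\tau)^{-3/4}$ in $\tau$ (arguably cleaner than the paper's weighted split with a free exponent $r$), and for the $L^2$ bounds you iterate Minkowski/Young in $x$ then $t$ rather than applying Young once jointly on $\Omega_T$ as the paper does; both variants land on the same powers of $T$.
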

\begin{proof}
To estimate the $L^\infty$ norm we use \eqref{Gammaest}, \eqref{Gamma_xest} and Cauchy–Schwarz inequality,
\begin{align*}
    |M_f|&\lesssim \bigg(\int_0^t\int_\R \frac{1}{{t-\tau}}e^{-\frac{d(x-\xi)^2}{2(t-\tau)}}\,dx\,d\tau\bigg)^{1/2}\|f\|_{L^2(\Omega_T)}\lesssim T^{1/4}\, \|f\|_{L^2(\Omega_T)}.
\end{align*}
Taking the sup over $\Omega_T,$ we get the first estimate in \eqref{LinfMf}. Similarly,
\begin{align*}
    |M_{f,x}|&\lesssim \bigg(\int_0^t\int_\R \frac{1}{{|t-\tau|}^{2-2r}}e^{-\frac{d(x-\xi)^2}{2(t-\tau)}}\,dx\,d\tau\bigg)^{1/2}\bigg(\int_0^t\int_\R \frac{1}{{|t-\tau|}^{2r}}f^2\,dx\,d\tau\bigg)^{1/2}\\
    &\lesssim \bigg(\int_0^t \frac{1}{{t-\tau}^{\frac{3}{2}-2r}}\,d\tau\bigg)^{1/2} \bigg(\int_0^t \frac{1}{{|t-\tau|}^{2r}}\,dx\,d\tau\bigg)^{1/2}\|f\|_{L^\infty((0,T),L^2(\R))}.
\end{align*}
For $r=\frac{3}{8},$ we get the second estimate in \eqref{LinfMf}.\\

To estimate the $L^2$ norm we use \eqref{Gammaest}, \eqref{Gamma_xest} and the Young's convolution inequality with $r=2$, $p=1$, and $q=2$. On $\Omega_T,$
\begin{align*}
\|M\|_{L^2}\lesssim & \bigg\|\int_0^T\int_\R\frac{1}{\sqrt{t-\tau}}e^{-\frac{d(x-\xi)^2}{4(t-\tau)}}|f(\xi,\tau)|\,d\xi\,d\tau\bigg\|_{L^2}\\
&\lesssim\|H\ast f\|_{L^2}\leq \|H\|_{L^{1}}\|f\|_{L^2}=C\, T\,\|f\|_{L^2},\\
\|M_x\|_{L^2}\lesssim & \bigg\|\int_0^T\int_\R\frac{1}{{t-\tau}}e^{-\frac{d(x-\xi)^2}{4(t-\tau)}}|f(\xi,\tau)|\,d\xi\,d\tau\bigg\|_{L^2}\\
&\lesssim\|\frac{1}{\sqrt{t}}H\ast f\|_{L^2}\leq \|\frac{1}{\sqrt{t-\tau}} H\|_{L^{1}}\|f\|_{L^2}=C\, \sqrt{T}\,\|f\|_{L^2}.
\end{align*}
This completes the proof.
\end{proof}
\begin{prop}\label{GammaCalpha} 
If $f(x,t)\in {L^\infty((0,T),L^2(\R))},$
then
\begin{align}\label{CalphaMf}
M_f\,,\, M_{f,x}\in C^\alpha(\Omega_T),
\end{align}
for $\alpha\in (0,1/4).$
\end{prop}
\begin{proof}
We prove the Holder estimate for $M_f$. The estimate for $M_{f,x}$ is slightly more involved, the same estimate is proven in \cite[Section 2.2.2]{MS}. The restriction on the exponent $\alpha<1/4$ appears only for $M_{f,x}$.
   Recall,
   \begin{align*}
M_f(x,t)&:=\int_0^t\int_\R\Gamma(x,t;\xi,\tau)f(\xi,\tau)\,d\xi\,d\tau\\
       &=\int_0^t\int_{\R}H^{\xi,\tau}(x-\xi,t-\tau)f(\xi,\tau)\,d\xi\,d\tau\\
    &+\int_0^t\int_{\R}\bigg[\int_\tau^t\int_{\R}H^{y,s}(x-y,t-s)\Phi(y,s;\xi,\tau)\,dy\,ds\bigg]f(\xi,\tau)\,d\xi\,d\tau\\
    &=:I(x,t)+II(x,t).
   \end{align*}
   For the term $I(x,t):$
   \begin{align*}
       \frac{|I(x_2,t)-I(x_1,t)|}{|x_2-x_1|^\alpha}\lesssim \int_0^t\int_\R\frac{|z-\xi|}{|t-\tau|^{3/2}}e^{-\frac{d(z-\xi)^2}{4(t-\tau)}}|x_2-x_1|^{1-\alpha}f(\xi,\tau)\,d\xi d\tau\\
       \lesssim \bigg[\int_0^t\int_\R\frac{|z-\xi|^2}{|t-\tau|^{3-\frac{3}{4}}}e^{-\frac{d(z-\xi)^2}{2(t-\tau)}}\,d\xi d\tau\bigg]^{1/2}\bigg[\int_0^t\int_\R\frac{1}{(t-\tau)^{3/4}}f^2(\xi,\tau)\,d\xi\,d\tau\bigg]^{1/2},
   \end{align*}
   for some $z\in(x_1,x_2).$ By the change of variable $u=\frac{z-\xi}{\sqrt{t-\tau}},$ we have
   \begin{align*}
\frac{|I(x_2,t)-I(x_1,t)|}{|x_2-x_1|^\alpha}&\lesssim \int_0^t\int_\R\frac{u^2}{|t-\tau|^{\frac{3}{4}}}e^{-\frac{d\,u^2}{2}}\,d\xi\,d\tau\bigg]^{1/2}\bigg[\int_0^t\int_\R\frac{1}{(t-\tau)^{3/4}}f^2(\xi,\tau)\,d\xi\,d\tau\bigg]^{1/2}\\
&\lesssim T^{1/4}\|f\|_{L^\infty((0,T),L^2(\R))}.
\end{align*}
For the term $II(x,t):$ We use the change of variables $y=u+\xi$ and $s=v+\tau,$ and write
\begin{align*}
    II=\int_0^t\int_\R\int_0^{t-\tau}\int_\R H(x-u-\xi,t-v-\tau)\Phi(u+\xi,v+\tau,\xi,\tau)f(\xi,\tau)\,du\,dv\,d\xi\,d\tau.
\end{align*}
Using the estimate \eqref{phiest}, we have
\begin{align*}
    &\frac{|II(x_2,t)-II(x_1,t)|}{|x_2-x_1|^\alpha}\\
    &\lesssim \int_0^t\int_\R\bigg[\int_0^{t-v}\int_\R\frac{H(x_2-u-\xi)-H(x_1-u-\xi)}{|x_2-x_1|^\alpha}f(\xi,\tau)\,d\xi\,d\tau\bigg]\frac{1}{v^{5/4}}e^{\frac{-d(u)^2}{4v}}\,du\,dv\\
    &\lesssim T^{1/4}\|f\|_{L^\infty((0,T),L^2(\R))}
    \int_0^t\int_\R\frac{1}{v^{5/4}}e^{\frac{-d(u)^2}{4v}}\,du\,dv\\
    &\lesssim T^{1/2}\|f\|_{L^\infty((0,T),L^2(\R))}.
\end{align*}
Due to the space-time scaling of the heat kernel, one can show the Holder estimate in time for $\alpha<1/2.$
\end{proof}
\section{Existence of a solution $v^J$ for $\eqref{veq}$.}\label{J2v}
Recall that $\theta=\theta^J$ is the solution of wave equation (\ref{waveJ}) depending on $J$. 
We now consider Cauchy problem of   \eqref{vJ}
\begin{align}\label{vveq}
& v_t=g(\theta^J)v_{xx}+h(\theta^J)\theta^J_t,\\
 \label{stvinit}
  &  v(x,0)=v_0(x)
\end{align}
and denote the solution by $v^J$.


\begin{prop}\label{propv}
Let  $v_0(x)$ be defined as $v'_0(x)=u_0(x).$ For any $T\in (0,\infty),$ there exists a function $$v^J(x,t)\in L^2((0,T],H^1(\R))$$ that satisfies \eqref{vveq} and \eqref{stvinit} in the sense that
\begin{align}\label{vweak}
     \int_0^T\int_R v^J\phi_t-v^J_x(g(\theta^J)\phi)_x+h(\theta^J)\theta^J_t\phi\,dx\,dt=0
\end{align}
for any $\phi\in H^1_0((0,T],\R)$ and, as $t\to 0^+$,
\begin{align}\label{limv}
v^J(x,t)&\to v_0(x) \mbox{ point-wise, }  ,\\\label{limv_x}
v_x^J(x,t)&\to v'_{0}(x)=u_0(x) \mbox{ almost everywhere. }
\end{align}
 Moreover, adding the initial data,
\[v^J, v^J_x\in C^\alpha(\R,(0,T])\cap L^\infty(\R, [0,T]) \mbox{ for any } 0\leq \alpha<1/4.\]
\end{prop}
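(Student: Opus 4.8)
The plan is to build $v^J$ explicitly via the parametrix representation recalled in Section \ref{HK}, then extract the regularity from the Gaussian estimates \eqref{Gammaest}--\eqref{Gamma_xest}. First I would rewrite \eqref{vveq} in the form $\mathcal{L}v = f$ with $\mathcal{L} = \partial_t - g(\theta^J)\partial_{xx} + \gamma_1$ by adding and subtracting the zeroth-order term: $v_t = g(\theta^J)v_{xx} - \gamma_1 v + \big(\gamma_1 v + h(\theta^J)\theta^J_t\big)$. This puts the equation in exactly the framework of Section \ref{HK}, with diffusion coefficient $g(\theta^J)$ whose argument $\theta^J$ is, by Proposition \ref{propth}, $C^{1/2}$ in $(x,t)$ so that $\Gamma$ and the bounds \eqref{Gammaest}, \eqref{Gamma_xest} apply. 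Since the forcing $h(\theta^J)\theta^J_t$ is only $L^2$ (not H\"older — $\theta^J_t$ is merely square-integrable), I cannot directly invoke the classical-solution statement of \cite[Theorem 12]{Fri}; instead I would \emph{define} $v^J$ by the representation formula
\begin{align}\label{vrep}
v^J(x,t) = \int_\R \Gamma(x,t;\xi,0)v_0(\xi)\,d\xi + \int_0^t\int_\R \Gamma(x,t;\xi,\tau)\big(\gamma_1 v^J(\xi,\tau) + h(\theta^J(\xi,\tau))\theta^J_\tau(\xi,\tau)\big)\,d\xi\,d\tau
\end{align}
and solve this linear integral equation by a fixed-point/Picard iteration in $L^\infty((0,T),L^2(\R))$, using the smallness in $T$ furnished by the estimates of Proposition \ref{GammaL2} (the $T^{1/4}$ and $T$ factors) to get a contraction on a short interval, then iterating to reach arbitrary $T$; the energy-type $L^2$ control needed to patch intervals comes from multiplying the PDE by $v^J$ and using \eqref{positiveDamping}-flavored dissipativity, together with $\theta^J_t\in L^2$.

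Next, with $v^J$ in hand, the regularity claims follow term by term from \eqref{vrep}. For the $L^\infty$ bound: the first term is bounded by Young's inequality once one notes $v_0 \in H^1 \subset L^\infty$ (here I would use that $v_0' = u_0 \in H^1$, hence $v_0$ is bounded, having the right decay from \eqref{tecinitial}); the second term is controlled by the first estimate in \eqref{LinfMf}, namely $\|M_f\|_{L^\infty} \lesssim T^{1/4}\|f\|_{L^2}$, with $f = \gamma_1 v^J + h(\theta^J)\theta^J_t \in L^2(\Omega_T)$. For $v^J_x$: differentiate \eqref{vrep}, use \eqref{Gamma_xest} on the data term (paired with $v_0' = u_0 \in L^2$, integrating the $\Gamma_x$ against $v_0$ by parts to move the derivative onto $v_0$, so $\Gamma * u_0$ is estimated by \eqref{Gammaest} rather than the more singular $\Gamma_x$), and the second estimate in \eqref{LinfMf} for $M_{f,x}$. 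The H\"older continuity with exponent $\alpha < 1/4$ is the standard parabolic-smoothing statement: splitting the time integral into $[0,t-\delta]$ and $[t-\delta,t]$ and using the Gaussian bounds \eqref{Gammaest}, \eqref{Gamma_xest}, one gets H\"older moduli in $t$ (and, via the spatial Gaussian, in $x$) of order $\delta^{1/4}$ for an $L^2$-in-space forcing — this is where the threshold $1/4$ originates, matching the exponents $T^{1/4}$ in Proposition \ref{GammaL2}.

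Finally, the initial conditions \eqref{limv}--\eqref{limv_x}: as $t\to 0^+$ the memory integral in \eqref{vrep} and its $x$-derivative tend to $0$ by the $T^{1/4}$ estimates applied on $[0,t]$, while $\int_\R \Gamma(x,t;\xi,0)v_0(\xi)\,d\xi \to v_0(x)$ pointwise (approximate-identity property of $\Gamma$, which behaves like the heat kernel $H$ by \eqref{Gammaest}) and, after the integration by parts, $\int_\R \Gamma(x,t;\xi,0)u_0(\xi)\,d\xi \to u_0(x)$ in $L^2$, hence a.e.\ along a subsequence — upgraded to a.e.\ convergence by the Lebesgue-point argument since $u_0 \in L^2 \cap L^1$. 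The weak formulation \eqref{vweak} is then a routine consequence: test \eqref{vrep} against $\phi \in H^1_0$, use that $\Gamma$ is the kernel for $\mathcal{L}$ (so $\int \Gamma \,\mathcal{L}^*\phi = \phi$ in the appropriate sense) and Fubini, the integrals all converging absolutely thanks to the Gaussian bounds.

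\textbf{Main obstacle.} I expect the principal difficulty to be the low regularity of the forcing term $h(\theta^J)\theta^J_t$: since $\theta^J_t$ is only in $L^2$ and not H\"older, the classical theory of \cite{Fri} does not apply off the shelf, so one must carefully justify that the parametrix representation \eqref{vrep} still produces a function satisfying \eqref{vveq} in the weak sense \eqref{vweak}, and that the $L^\infty((0,T),L^2)$-in-$x$ norm of the forcing (needed for the $M_{f,x}$ estimate in \eqref{LinfMf}) is finite and propagates through the iteration — this is precisely the point flagged in the introduction as requiring the analysis of \cite{MS} that "goes beyond treatments in \cite{Fri}." The bookkeeping of which norm ($L^2(\Omega_T)$ versus $L^\infty_t L^2_x$) controls which piece, and ensuring the fixed-point iteration closes in a space carrying both, is the delicate part.
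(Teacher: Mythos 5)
Your overall strategy (parametrix representation plus Gaussian kernel estimates, then read off regularity and initial data) is aligned with the paper, but the execution diverges in two substantive ways, and one step has a real gap.

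First, the paper does \emph{not} rewrite \eqref{vveq} as $\mathcal{L}v = \gamma_1 v + h\theta_t$. Because the $v$-equation has no zeroth-order term, the paper works with the kernel $\Gamma^0$ of the simpler operator $\mathcal{L}_0 = \partial_t - g(\theta^J)\partial_{xx}$ (the $\gamma_1=0$ case of Section~\ref{HK}), so the representation \eqref{compv} is \emph{explicit}: $v$ does not appear on the right-hand side, and no Picard/fixed-point iteration or dissipativity-based patching across intervals is needed. Introducing $\gamma_1 v$ artificially manufactures a linear integral equation that you then have to solve, which is an unnecessary detour and obscures the fact that this piece of the problem is genuinely linear-and-explicit. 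Second, to cope with the $L^2$ forcing $h\theta_t$, the paper mollifies $\theta_t^\epsilon$, obtains classical solutions $v^\epsilon$ for which the weak formulation \eqref{vweak} is immediate, and then passes to the limit in $L^2(\Omega_T)$ using the convolution bounds in Proposition~\ref{GammaL2}. This is cleaner than your plan to define $v^J$ directly by the representation with rough forcing and then argue that it satisfies \eqref{vweak}; the mollification makes the "routine consequence" genuinely routine.

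The step that does not survive scrutiny is moving the $x$-derivative off $\Gamma^0_x(x,t;\xi,0)$ onto $v_0$ by integrating by parts. For the constant-coefficient heat kernel one has $H_x(x-\xi)=-H_\xi(x-\xi)$, but here $\Gamma^0(x,t;\xi,\tau)$ depends on $\xi$ not only through $x-\xi$ but also through $g(\theta(\xi,\tau))$ in the frozen-coefficient parametrix $H^{\xi,\tau}$ and through the correction term built from $\Phi(y,s;\xi,\tau)$ in \eqref{GGGamma}; so $\partial_x\Gamma^0 \neq -\partial_\xi\Gamma^0$ and the integration by parts you describe does not hold without controlling the extra $\xi$-derivative terms. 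The paper sidesteps this entirely: it notes that $u^0(x,t):=\int_\R\Gamma^0_x(x,t;\xi,0)v_0(\xi)\,d\xi$ solves the divergence-form problem $u^0_t-(g(\theta)u^0_x)_x=0$ with data $u^0(x,0)=u_0\in H^1$, and then invokes standard parabolic theory (\cite{Evans}) to get $u^0\in C([0,T],L^2)$ and a.e.\ attainment of the initial datum. I would replace your integration-by-parts argument with this observation, and drop the fixed-point machinery in favor of the explicit $\Gamma^0$ representation plus mollification.
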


\begin{proof} { For simplicity, we will drop the subscript $J$ in the proof.\\ 

Since $\theta_t$ is generally not H\"older continuous, to apply the results in Section \ref{HK}, we let $\theta_t^\epsilon$ be the mollification of $\theta_t$ for $\epsilon>0$ small.} It is  known that $\theta_t^\epsilon\in C^\infty_c(\Omega_T)$ and, as $\epsilon\to 0$, $\theta_t^\epsilon\to \theta_t$  in $L^2(\Omega_T).$ Denote the solution of
\begin{align}\label{stv}
&v_t=g(\theta)v_{xx}+h(\theta)\theta_t^\epsilon
\end{align}
with the same initial condition (\ref{stvinit})
by $v^\epsilon(x,t).$ As discussed in Section \ref{HK}, $v^\epsilon(x,t)$ is 
a classical solution and can be written explicitly as
\begin{align}\label{vepsilon}
    v^\epsilon(x,t)=\int_\R \Gamma^0(x,t,\xi,0)v_0(\xi)\,d\xi+\int_0^t\int_\R\Gamma^0(x,t;\xi,\tau)h(\theta)\theta_\tau^\epsilon(\xi,\tau)\,d\xi\,d\tau,
\end{align}
{where $\Gamma^0$ is the  kernel of the operator ${\mathcal L}_0:=\partial_t-g(\theta(x,t))\partial_{xx}$.
Note that the operator ${\mathcal L}_0$ is the same as  ${\mathcal L}$ in (\ref{diff}) with $\gamma_1=0$. We comment that all estimates in Section \ref{HK} for $\Gamma$ still hold true for $\Gamma^0$ with possibly different constants.}\\

Clearly, $v^\epsilon$ in \eqref{vepsilon} satisfies the weak formulation \eqref{vweak} since it is a classical solution to \eqref{stv} and \eqref{stvinit}. We have
\begin{align}\label{epsilonweak}
   \int_0^T\int_R v^\epsilon\phi_t-v^\epsilon_x(g\phi)_x+h\theta_t^\epsilon\phi\,dx\,dt=0 
\end{align}
for all $\phi\in H^1_0(\R\times (0,T)).$
At this point, we claim that the expressions
\begin{align}\label{compv}
    v(x,t)=\int_\R \Gamma^0(x,t,\xi,0)v_0(\xi)\,d\xi+\int_0^t\int_\R\Gamma^0(x,t;\xi,\tau)h(\theta)\theta_\tau(\xi,\tau)\,d\xi\,d\tau
\end{align}
and
\begin{align}\label{compv_x}
    v_x(x,t)=\int_\R \Gamma^0_x(x,t,\xi,0)v_0(\xi)\,d\xi+\int_0^t\int_\R\Gamma^0_x(x,t;\xi,\tau)h(\theta)\theta_\tau(\xi,\tau)\,d\xi\,d\tau
\end{align}
are the limits of $v^\epsilon(x,t)$ and $v_x^\epsilon(x,t),$ respectively, in $L^2(\Omega_T)$ sense, and hence, $v(x,t)$
 is a weak solution of \eqref{vveq}.\\

 Subtract (\ref{compv}) from (\ref{vepsilon}) and apply Proposition \ref{GammaL2} and estimate \eqref{Gammaest} to get
 \begin{align*}
    |v^\epsilon-v|\leq \int_0^T\int_\R|\Gamma^0||h||\theta^\epsilon_\tau-\theta_\tau|\,d\xi\,d\tau
\end{align*}
and
\begin{align*}
    \|v^\epsilon-v\|_{L^2(\Omega_T)}&\lesssim \|H\ast (|h||\theta^\epsilon_\tau-\theta_\tau|)\|_{L^2(\Omega_T)}\\
    &\lesssim \|H\|_{L^1(\Omega_T)}\|\theta^\epsilon_\tau-\theta_\tau\|_{L^2(\Omega_T)}\to 0\;\mbox{ as }\; \epsilon\to 0.
\end{align*}
 Similarly, by Proposition \ref{GammaL2} and estimate \eqref{Gamma_xest} we have
 \begin{align*}\begin{split}
    \|v_x^\epsilon-v_x\|_{L^2(\Omega_T)}&\lesssim \|\frac{1}{\sqrt{t}}H\ast (|h||\theta^\epsilon_\tau-\theta_\tau|)\|_{L^2(\Omega_T)}\\
    &\lesssim
    \|\frac{1}{\sqrt{t-\tau}}H\|_{L^1(\Omega_T)}\|\theta^\epsilon_\tau-\theta_\tau\|_{L^2(\Omega_T)}\to 0\;\mbox{ as }\; \epsilon\to 0.
    \end{split}
\end{align*}
Taking $\epsilon\to 0$ in \eqref{epsilonweak} we obtain \eqref{vweak}. Hence, the weak formulation \eqref{vweak} is satisfied as a limit of the classical solution $v^\ve$ to the initial value problem \eqref{stv},\eqref{stvinit}.\\

For the initial data, the first limit \eqref{limv} follows from \cite{Fri}.
The second limit \eqref{limv_x} can be shown by considering the equation satisfied by the first term of \eqref{compv_x} that is by letting $u^0(x,t)=v^0_x(x,t):=\int_\R \Gamma^0_x(x,t,\xi,0)v_0(\xi)\,d\xi.$ Then
\[u^0_t-(g(\theta)u^0_x)_x=0\]
\[u^0(x,0)=u_0(x)\in H^1(\R).\]

It is known that the solution $u^0\in C([0,T],L^2(\R))$ and $u^0(x,0)=u_0(x)$ almost everywhere \cite{Evans}. Hence, we obtain
\[v_x(x,0)=v'_0(x), \mbox{ almost everywhere }\]

Finally, to show $v, v_x\in L^\infty\cap C^\alpha(\R\times (0,T]),$ we use Propositions \ref{GammaL2} and \ref{GammaCalpha}. As mentioned in the proof of Proposition \ref{GammaCalpha}, we refer the reader to \cite[Section 2.2.2]{MS} for the full details.
\end{proof}
\section{Existence of a solution $J.$}\label{FixedPoint}
\subsection{Fixed Point Argument:}
{ Recall in Sections \ref{J2theta} and \ref{J2v}, for any $J(x,t)\in C^\alpha\cap L^2\cap L^\infty$ for some $\alpha>0$, we solve $\theta^J$ from system (\ref{thetaeq}) with $A_x$ replaced by $J$,  then solve $v^J$ (and hence $u^J$) from system (\ref{ueq}) with $\theta$ replaced by $\theta^J$, and  we show that
\beq\label{6.1}
v^J, u^J \in (L^2\cap L^\infty)(\R\times [0,T])\cap C^\alpha(\R\times (0,T])\;\mbox{ and }\;
\theta^J_t, \theta^J_x\in L^\infty([0,T],L^2(\R)).\eeq

We now solve $A^J$ from system (\ref{AAeq}) with $(\theta,v,A_x+A_{0,x})$ replaced by $(\theta^J,v^J,J)$. With all these preparations, we will then define a mapping $\mathcal M(J)$ so that its fixed point gives rise to a solution of our original Cauchy problem.}

%
%

In view of system (\ref{AAeq}), define the operator
\[\mathcal{L}^J:=\partial_t-g(\theta^J)\partial_{xx}+\gamma_1.\]
The function $A$, introduced in (\ref{VarA}), satisfies \eqref{AAeq} recast below
\begin{align}\label{Aeq}
    \mathcal{L}^J A=&F(\theta^J,\theta^J_t,\theta^J_x, v^J)+g'(\theta^J)\theta^J_x J,
\end{align}
where $F(\theta,\theta_t,\theta_x, v)=f(\theta,\theta_x, v_x)+G(\theta,\theta_t,\theta_x, v_x)$ with
$f$ and $G$ given in from \eqref{Fun}, along with the initial data\begin{align}\label{A_0}
    A(x,0)=0.
\end{align} 



Formally, following the discussion in Section \ref{HK}, $A$ can be expressed as
\begin{align}\label{Aexpression}
    A(x,t)=\int_0^t\int_\R\Gamma(x,t;\xi,\tau)\big[F(\theta^J,v^J)+g'(\theta^J)\theta^J_{\xi}J\big](\xi,\tau)\,d\xi\,d\tau.
\end{align}
We now define a mapping $\mathcal{M}$ by
\begin{align}\begin{split}\label{5.6}
    &\mathcal{M}( J)(x,t):=A_x(x,t)+J_{0}\\
&=
\int_0^t\int_\R\Gamma_x(x,t;\xi,\tau)\big[F(\theta^J,v^J)+g'(\theta^J)\theta^J_\xi J\big]\,d\xi\,d\tau+J_0.
\end{split}
\end{align}

The goal is to show the existence of a fixed point $J^*=\mathcal{M}(J^*)$ in a suitable space. This will lead to a weak solution $(\theta^{J^*},u^{J^*})$ for \eqref{sysf} and \eqref{sysw}.




We first give a uniform a priori energy estimate for $\mathcal{E}(t)$ defoined in \eqref{calEdef}.
\begin{thm}\label{lemma3.2} For any fixed $T>0$ and for any weak solution $(u(x,t),\theta(x,t))$ of system \eqref{sysf} and \eqref{sysw}, we have, for $t\in[0,T]$, 
\beq\label{engineq2}
\mathcal{E}(t)\leq \mathcal{E}(0) -\iint_{\R\times [0,t]}(\frac{v_t^2}{g^2}+\theta^2_t)\,dxdt.
\eeq
\end{thm}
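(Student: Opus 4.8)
The plan is to run the classical energy (multiplier) estimate on the coupled system. Formally, I would test the wave equation \eqref{sysw} against $\theta_t$ and the parabolic equation \eqref{sysf} against $u$, integrate over $\R\times[0,t]$, and add. In the wave part, integrating $c(\theta)\bigl(c(\theta)\theta_x\bigr)_x\theta_t$ once by parts in $x$ produces exactly $-\tfrac{1}{2}\partial_t\!\int c^2(\theta)\theta_x^2\,dx$, since the two terms carrying $c'(\theta)\theta_x^2\theta_t$ cancel; what remains is
\[
\tfrac{1}{2}\tfrac{d}{dt}\int_\R\bigl(\theta_t^2+c^2(\theta)\theta_x^2\bigr)dx+\gamma_1\int_\R\theta_t^2\,dx+\int_\R h(\theta)u_x\theta_t\,dx=0 .
\]
The parabolic part, after one integration by parts in $x$, gives $\tfrac{1}{2}\tfrac{d}{dt}\!\int_\R u^2\,dx+\int_\R g(\theta)u_x^2\,dx+\int_\R h(\theta)\theta_t u_x\,dx=0$. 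All boundary terms at $x=\pm\infty$ drop because $\theta_t$, $\theta_x$ and $u=v_x$ are all square integrable in $x$ (from the energy bound \eqref{ET} and Proposition~\ref{propv}).

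Adding the two identities, the cross terms combine into $2\int_\R h(\theta)u_x\theta_t\,dx$, so with $\mathcal{E}$ as in \eqref{calEdef},
\[
\tfrac{1}{2}\tfrac{d}{dt}\mathcal{E}(t)+\int_\R\Bigl(\gamma_1\theta_t^2+2h(\theta)u_x\theta_t+g(\theta)u_x^2\Bigr)dx=0 .
\]
The key algebraic step is to read the integrand as a positive quadratic form in $(\theta_t,u_x)$: completing the square in $u_x$ and using $v_t=g(\theta)u_x+h(\theta)\theta_t$, i.e.\ $u_x+\tfrac{h(\theta)}{g(\theta)}\theta_t=\tfrac{v_t}{g(\theta)}$, it equals $\bigl(\gamma_1-\tfrac{h^2(\theta)}{g(\theta)}\bigr)\theta_t^2+\tfrac{v_t^2}{g(\theta)}$. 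Now \eqref{positiveDamping} enters: it gives $g(\theta)>\overline C>0$ and $\gamma_1-\tfrac{h^2(\theta)}{g(\theta)}=\tfrac{\gamma_1}{g(\theta)}\bigl(g(\theta)-\tfrac{h^2(\theta)}{\gamma_1}\bigr)>0$, and both coefficients, being continuous periodic functions of $\theta$, are bounded above and below away from zero. Hence the dissipation rate dominates a positive multiple of $\theta_t^2+\tfrac{v_t^2}{g^2(\theta)}$; integrating in time yields \eqref{engineq2}, and in particular $\mathcal{E}$ is non-increasing, which is the form in which this estimate is used in Section~\ref{FixedPoint}.

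The calculation itself is routine; the real obstacle is justifying it for a weak solution in the sense of Definition~\ref{def1}, for which $\theta_t\in L^2$ and $u\in L^2((0,T);H^1)$ are not admissible test functions in \eqref{thetaweak1} and \eqref{uweak}. I would handle this by approximation, exploiting that the solution is built in Sections~\ref{J2theta}--\ref{J2v} as a limit of objects for which the identity holds exactly: the $\theta$-component is smooth in the characteristic coordinates $(X,Y)$, so its energy balance is precisely what underlies \eqref{ET}, while the $u$-component is the $L^2(\Omega_T)$-limit of the classical solutions $v^\epsilon$ of \eqref{stv} from Proposition~\ref{propv}. One then passes to the limit, using weak lower semicontinuity for $\mathcal{E}(t)$ and for the dissipation integral and strong $L^2$-convergence in the cross term; alternatively, one tests \eqref{thetaweak1} and \eqref{uweak} against Steklov averages of $\theta_t$ and $u$ and lets the averaging parameter tend to $0$. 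A minor point to check along the way is the spatial decay needed to discard the boundary terms at $x=\pm\infty$, which is supplied by the same $L^2$-in-$x$ bounds.
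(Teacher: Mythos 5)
Your computation is correct and, at the level of the underlying energy balance, follows the same route as the paper. Both proofs test the wave equation against $\theta_t$ and account for the $u$-coupling by an integration by parts in $(x,t)$; the only real difference is cosmetic. You organize the dissipation by completing the square in $u_x$, obtaining $g u_x^2+2h u_x\theta_t+\gamma_1\theta_t^2 = \tfrac{v_t^2}{g}+\bigl(\gamma_1-\tfrac{h^2}{g}\bigr)\theta_t^2$ directly, whereas the paper (Appendix~\ref{appE}) instead starts from the $\theta$-energy inequality in the characteristic variables (Green's theorem over $D_t$, Appendix~\ref{Appenergy}) and then rewrites the cross term $\iint \tfrac{v_t}{g}h\theta_t$ via $J=v_{xx}+\tfrac{h}{g}\theta_t$ and a further integration by parts to produce the $\tfrac12\int u^2$ contribution. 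These are the same identity in different clothing, and your version is arguably cleaner. On the rigor side you also identify the same two pillars the paper uses: the $\theta$-energy identity is justified because $\theta$ is built as a smooth solution of the semilinear system in $(X,Y)$ (this is exactly what produces \eqref{ET}), and the $u$-part passes to the limit from the classical $v^\epsilon$ of Proposition~\ref{propv}. Your alternative suggestion of Steklov averaging is more delicate here than you suggest, because the principal part $c(\theta)(c(\theta)\theta_x)_x$ is quasilinear with only $L^2$ regularity of $\theta_x$, so the $(X,Y)$-route is the one that actually works; but since you present it only as an alternative, this is not a gap. One point worth being explicit about, which you mention but do not carry out, is that discarding the boundary terms requires the limit argument along the characteristics $x^\pm(t)$ as $(a,b)\to(-\infty,\infty)$ (using $v_t$ bounded and $v_x\in L^2$), exactly as written in the paper. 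Finally, your honest remark that the dissipation dominates only a \emph{positive multiple} of $\theta_t^2+\tfrac{v_t^2}{g^2}$ is well taken: the coefficients are $\gamma_1-\tfrac{h^2}{g}$ and $\tfrac1g$, bounded below by \eqref{positiveDamping} but not equal to $1$, so \eqref{engineq2} as stated should be read with an implicit constant.
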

The proof is the same as the one in \cite{CHL20}. One can find it in the appendix \eqref{appE}.
\begin{cor}
For any weak solution of system \eqref{sysf} and \eqref{sysw}, there exists a constant $C_0$ depending on $\mathcal{E}(0),\|J'_0\|_{L^2(\R)},$ and $\|J_0\|_{L^1(\R)}$, such that
\beq\label{Gfbound}
\|G\|_{L^\infty(\Omega_T)}\leq C_0,\qquad \|f\|_{L^\infty([0,T],L^2(R))}\leq C_0,\eeq
\end{cor}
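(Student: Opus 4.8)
The plan is to read both bounds off the a priori energy inequality of Theorem~\ref{lemma3.2}, after recording the elementary fact that every coefficient appearing in \eqref{Fun} is a bounded function of $\theta$. Indeed $g(\theta),h(\theta),c^2(\theta)$ are trigonometric polynomials in $\theta$ with $g(\theta)\ge\overline C>0$ by \eqref{positiveDamping} and $c^2(\theta)\ge c_0:=\min\{K_1,K_3\}>0$, so each of $\gamma_1-\frac{h^2}{g}$, $\frac{hc^2}{g}$, $g$, $\left[\gamma_1-\frac{h^2}{g}\right]'$, $\frac{h'}{g}-\frac{g'h}{g^2}$ and $\left(\frac{hc}{g}\right)'c$ is smooth and bounded on $\R$, with a bound $C$ depending only on $K_1,K_3$ and the Leslie coefficients. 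Theorem~\ref{lemma3.2} gives $\mathcal{E}(t)\le\mathcal{E}(0)$ for $t\in[0,T]$, whence, uniformly in $t\in[0,T]$,
\[
\int_\R\theta_t^2\,dx\le\mathcal{E}(0),\qquad \int_\R\theta_x^2\,dx\le c_0^{-1}\mathcal{E}(0),\qquad \int_\R u^2\,dx\le\mathcal{E}(0);
\]
recall also $v_x=u$ and, from \eqref{a0a0x}, $\|A_0\|_{L^\infty(\R)}\le\|J_0\|_{L^1(\R)}$.

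For $f=[\gamma_1-\frac{h^2}{g}]v_x+\frac{hc^2}{g}\theta_x+gJ_0'$, the first two terms are pointwise $\le C(|u|+|\theta_x|)$, so their $L^\infty([0,T],L^2(\R))$ norm is $\lesssim\sqrt{\mathcal{E}(0)}$ by the displayed bounds, while the last term has $L^2(\R)$ norm $\le C\|J_0'\|_{L^2(\R)}$ for every $t$; hence $\|f\|_{L^\infty([0,T],L^2(\R))}\le C\bigl(\sqrt{\mathcal{E}(0)}+\|J_0'\|_{L^2(\R)}\bigr)$. For $G$, fix $(x,t)\in\Omega_T$, bound each integrand by its modulus and enlarge the domain of integration to $\R$: the $\theta_t^2$-term is $\le C\int_\R\theta_t^2\,dz\le C\mathcal{E}(0)$, the $\theta_z^2$-term is $\le C\int_\R\theta_z^2\,dz\le C\mathcal{E}(0)$, the cross term is $\le C\int_\R|\theta_z||v_z|\,dz\le C\|\theta_x(\cdot,t)\|_{L^2}\|u(\cdot,t)\|_{L^2}\le C\mathcal{E}(0)$ by Cauchy--Schwarz, and $|\gamma_1A_0(x)|\le\gamma_1\|J_0\|_{L^1(\R)}$; all bounds are uniform in $(x,t)$, so $\|G\|_{L^\infty(\Omega_T)}\le C\bigl(\mathcal{E}(0)+\|J_0\|_{L^1(\R)}\bigr)$. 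Taking $C_0$ to be the maximum of the two right-hand sides proves \eqref{Gfbound}.

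There is no real obstacle here; the only point requiring a word is that the improper integral defining $G$ in \eqref{Fun} converges, which is precisely guaranteed by $\theta_t^2,\theta_z^2\in L^1(\R)$ and, via Cauchy--Schwarz, $\theta_z v_z\in L^1(\R)$ for each fixed $t$ --- all consequences of the energy bound --- together with $J_0\in L^1(\R)$ from \eqref{tecinitial}.
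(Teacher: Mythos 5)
Your proof is correct and is exactly the ``straightforward'' argument the paper alludes to: smoothness and periodicity of the coefficient functions together with \eqref{positiveDamping} and $c^2\ge\min\{K_1,K_3\}$ give uniform bounds on every coefficient, and then Theorem~\ref{lemma3.2} (i.e.\ $\mathcal{E}(t)\le\mathcal{E}(0)$) plus Cauchy--Schwarz yields the stated bounds on $G$ and $f$, with $\|A_0\|_{L^\infty}\le\|J_0\|_{L^1}$ handling the last term of $G$. No gaps; this matches the paper's intended proof.
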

The proof is straightforward using \eqref{calEdef}, \eqref{tecinitial}, \eqref{positiveDamping},
and definition of $G$ and $f$ in \eqref{Fun}.

Now we fix an arbitrary $T>1$ once and for all and
consider the following spaces over $\overline{\Omega}_T:=\R\times [0,T].$ 
We use the following Banach space $L^*$, which includes all functions $f$ in $\bigcap\limits_{p\in[2+a,\infty)} L^p$, for some fixed $a>0$, with a finite norm
\[
\|f\|_{L^*(\overline{\Omega}_T)}=\sup\limits_{p\in[2+a,\infty)}\|f\|_{\bar L^p(\overline{\Omega}_T)}< \infty,
\]
and 
\begin{align*}
    N_T:=\bar C^\alpha\cap \bar L^2\cap \bar L^\infty(\overline{\Omega}_T),
\end{align*}
with
\begin{align*}
 \|S\|_{N_T(\overline\Omega_T)}=\max\left\{\|S\|_{\bar L^\infty(\overline\Omega_T)}, \|S\|_{\bar L^2(\overline\Omega_T)}, \|S\|_{\bar C^\alpha(\overline\Omega_T)}\right\},   
\end{align*}
where $\alpha\in (0,1/4)$ and
\[\|S\|_{\bar L^\infty(\overline\Omega_T)}=\|e^{-\lambda t}S(x,t)\|_{L^\infty(\overline\Omega_T)},\]

\[\|S\|_{\bar L^p(\overline\Omega_T)}=\|e^{-\lambda t}S(x,t)\|_{L^p(\overline\Omega_T)},\]
and

\[\|S\|_{\bar C^\alpha(\overline\Omega_T)}=\sup_{\|(h_1,h_2)\|>0}e^{-\lambda t}\frac{|S(x+h_1,t+h_2)-S(x,t)|}{\|(h_1,h_2)\|^\alpha},\]
for some $\lambda=\lambda(\mathcal{E}(0),T)>0$ sufficiently large that will be determined later. Let 
\beq\label{kT}
k_T=2(\|J_0\|_{C^\alpha\cap L^2\cap L^\infty}+\max\{C_0,C_0^2\}T^2).
\eeq 
We define
\begin{align*}
    K_T&:=\big\{\mathcal{S}(x,t)\in N_T:\|\mathcal{S}\|_{N_T}\leq k_T,\, \mathcal{S}(x,0)=J_0(x)\big\}.
\end{align*}
By \eqref{tecinitial} and the Sobolev embedding, it is clear that $\|J_0(x)\|_{N_T(\R)}<\|J_0(x)\|_{C^\alpha\cap L^2\cap L^\infty}.$ 

Furthermore, for any fixed $T$, it is easy to show that the $N_T$ norm and the $C^\alpha\cap L^2\cap L^\infty$ norm are equivalent. In \cite{BZ}, a similar norm was used to prove the existence.
\begin{cor}
    $K_T$ is compact in $L^*$ on any $\overline\Omega_T.$
\end{cor}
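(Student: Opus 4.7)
The plan is a straightforward Arzel\`a--Ascoli plus diagonal extraction, followed by an interpolation step to pass from locally uniform convergence to $L_*^p$ convergence. Every $\mathcal S\in K_T$ satisfies the pointwise bound $|\mathcal S(x,t)|\le k_T e^{\lambda T}$ together with the uniform H\"older estimate $|\mathcal S(x+h_1,t+h_2)-\mathcal S(x,t)|\le k_T e^{\lambda T}\|(h_1,h_2)\|^\alpha$, so on each compact box $B_N:=[-N,N]\times[0,T]$ the family $K_T$ is uniformly bounded and equicontinuous. Arzel\`a--Ascoli therefore yields precompactness in $C(B_N)$, and a diagonal extraction over the exhaustion $B_N\uparrow \overline\Omega_T$ produces, from any sequence $\{\mathcal S_n\}\subset K_T$, a subsequence (not relabeled) converging locally uniformly on $\overline\Omega_T$ to some continuous limit $\mathcal S^*$.

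Next I would verify that $\mathcal S^*\in K_T$: the $L_*^\infty$ and $C_*^\alpha$ bounds pass to the pointwise limit directly, the $L_*^2$ bound is preserved by Fatou's lemma, and the initial condition $\mathcal S^*(x,0)=J_0(x)$ follows from locally uniform convergence at $t=0$. To upgrade to convergence in each $L_*^p$ with $p\in[a+2,\infty)$, I would interpolate between the two norms:
\[\|\mathcal S_n-\mathcal S^*\|_{L_*^p}^p\le\|\mathcal S_n-\mathcal S^*\|_{L_*^\infty}^{p-2}\,\|\mathcal S_n-\mathcal S^*\|_{L_*^2}^{2}\le(2k_T)^{p-2}\,\|\mathcal S_n-\mathcal S^*\|_{L_*^2}^{2},\]
so that the question reduces to $L_*^2$ convergence. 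On each truncated slab $\{|x|\le R\}\times[0,T]$, locally uniform convergence and the uniform pointwise majorant $2k_T e^{\lambda T}$ give $L^2$ convergence by dominated convergence.

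The main obstacle I anticipate is the $x$-tail $\iint_{|x|>R}|\mathcal S_n-\mathcal S^*|^{2}\,e^{-2\lambda t}\,dx\,dt$, since the weight $e^{-\lambda t}$ controls only $t$ and does nothing to tame large $|x|$; in particular, $L_*^2$ boundedness by itself is not enough for equitightness in $x$. I would try to extract uniform tail decay from the common initial trace $\mathcal S(x,0)=J_0(x)\in L^1\cap H^1$ (which has $L^2$-integrable $x$-tails), together with the a priori energy inequality of Theorem~\ref{lemma3.2}: the latter pins $\iint (u_x+(h/g)\theta_t)^2\,dx\,dt$ uniformly, and through the identity $J=u_x+(h/g)\theta_t$ that motivated $\mathcal S$, this should transfer to a uniform $L^2$-tail modulus along the sequence. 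Combining equitightness in $x$ with dominated convergence on bounded slabs then yields $\|\mathcal S_n-\mathcal S^*\|_{L_*^p}\to 0$ for every $p\in[a+2,\infty)$, which is precisely compactness of $K_T$ in $L_*$. If genuine equitightness turns out to be unavailable, the same Arzel\`a--Ascoli step already establishes the weaker, but for the Schauder argument in Section~\ref{FixedPoint} still sufficient, compactness in the local Fr\'echet topology $\bigcap_{p,R}L^p([-R,R]\times[0,T])$.
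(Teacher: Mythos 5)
Your Arzel\`a--Ascoli plus interpolation skeleton is reasonable, and you are right to flag the $x$-tail as the real obstruction: on a bounded slab $[-R,R]\times[0,T]$ the uniform $L^\infty$ bound $k_Te^{\lambda T}$ and the uniform $C^\alpha_*$ modulus give precompactness in $C$ and hence in every $L^p$, and the interpolation $\|\cdot\|_{L_*^p}^p\le\|\cdot\|_{L_*^\infty}^{p-2}\|\cdot\|_{L_*^2}^2$ correctly reduces everything to the $L_*^2$ case. The gap is in the fix you sketch for the tails, and it is not a small one: the set $K_T$ consists of \emph{arbitrary} functions $\mathcal S\in N_T$ with $\|\mathcal S\|_{N_T}\le k_T$ and $\mathcal S(\cdot,0)=J_0$; nothing in the definition of $K_T$ ties $\mathcal S$ for $t>0$ to a weak solution $(u,\theta)$ of the system, so the a priori energy inequality of Theorem~\ref{lemma3.2}, which is a statement about weak solutions, and the identity $J=u_x+(h/g)\theta_t$, which is established only for the eventual fixed point $J^*$, simply cannot be invoked for a generic $\mathcal S\in K_T$. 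The conditions ``$\|\mathcal S\|_{L_*^2}\le k_T$, $\|\mathcal S\|_{L_*^\infty}\le k_T$, $\|\mathcal S\|_{C_*^\alpha}\le k_T$, $\mathcal S(\cdot,0)=J_0$'' by themselves do not yield any uniform decay in $x$ away from $t=0$ (a bump of fixed shape translated to $x=n$, multiplied by $t$, has all the required bounds and no $L^2$-convergent subsequence), so one cannot close the argument with equitightness derived from those conditions alone. The Kolmogorov--Riesz criterion (cf.\ \cite{OH}) requires exactly this uniform tail smallness, and your argument as written does not supply it.

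Your fallback to compactness in $\bigcap_{p,R}L^p([-R,R]\times[0,T])$ is also not a drop-in replacement: Theorem~\ref{sch} as stated in the paper is the Banach-space Schauder theorem, and working in the local Fr\'echet topology would require switching to a Schauder--Tychonoff-type result and re-verifying the continuity and the self-map property of $\mathcal M$ in that topology, none of which is addressed. The paper itself defers the proof to Section~6.3 of \cite{CHL20}, where the $x$-tail control is extracted from the specific structure of the functions involved (decay built into the definitions, Gaussian decay of the parabolic kernel, and the $L^1$-decay of the initial data $J_0$), rather than from the energy inequality; that is the ingredient missing from your proposal.
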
 
\begin{proof} The proof of this corollary is given in Section 6.3 of \cite{CHL20}, using the Frechet-Kolmogorov theorem and the fact that the $\bar L^p, \bar L^\infty,  \bar C^\alpha$ norms are equivalent to $L^p, L^\infty, C^\alpha$ norms in $t\in[0,T]$, respectively.
\end{proof}

We now recall the Schauder Fixed Point Theorem that will be applied to complete our analysis.

\begin{thm}[Schauder Fixed Point Theorem]\label{sch}
Let $E$ be a Banach space, and let $K$ be a convex set in $E$. Let  $\mathcal T: K\to K$ be a continuous map such that $\mathcal T(K) \subset K$, where
$K$ is a compact subset of $E$. Then $\mathcal T$ has a fixed point in $K$.
\end{thm}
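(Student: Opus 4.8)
The statement is the classical Schauder fixed point theorem, whose standard proof reduces it to Brouwer's fixed point theorem via a finite-dimensional approximation. The plan is as follows. First I would use the compactness of $K$: for each integer $n\geq 1$ pick a finite $1/n$-net $\{x_1^{(n)},\dots,x_{m_n}^{(n)}\}\subset K$, and set $C_n:=\mathrm{conv}\{x_1^{(n)},\dots,x_{m_n}^{(n)}\}$. Since $K$ is convex and contains each $x_i^{(n)}$, we have $C_n\subseteq K$, and $C_n$ is a compact convex subset of a finite-dimensional subspace of $E$.

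Next I would introduce the \emph{Schauder projection} $P_n:K\to C_n$,
\[
P_n(x):=\frac{\sum_{i=1}^{m_n}\mu_i(x)\,x_i^{(n)}}{\sum_{i=1}^{m_n}\mu_i(x)},\qquad \mu_i(x):=\max\!\big\{0,\ \tfrac1n-\|x-x_i^{(n)}\|\big\}.
\]
The denominator is strictly positive because the net is $1/n$-dense in $K$, so $P_n$ is continuous; and since $P_n(x)$ is a convex combination of those $x_i^{(n)}$ lying within distance $1/n$ of $x$, one obtains $\|P_n(x)-x\|<1/n$ for every $x\in K$. Then $T_n:=P_n\circ(\mathcal T|_{C_n})$ is a continuous self-map of the compact convex finite-dimensional set $C_n$, so Brouwer's theorem provides $y_n\in C_n\subseteq K$ with $P_n(\mathcal T(y_n))=y_n$.

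Finally I would pass to the limit: by compactness of $K$ there is a subsequence $y_{n_k}\to y^{*}\in K$, and since
\[
\|y_{n_k}-\mathcal T(y_{n_k})\|=\|P_{n_k}(\mathcal T(y_{n_k}))-\mathcal T(y_{n_k})\|<\tfrac1{n_k}\longrightarrow 0
\]
while $\mathcal T$ is continuous, letting $k\to\infty$ yields $\mathcal T(y^{*})=y^{*}$. The only substantial ingredient is Brouwer's fixed point theorem; the remaining work is the routine verification that $P_n$ is well defined, continuous, and $1/n$-close to the identity on $K$, together with $C_n\subseteq K$. Since this theorem is used here only as a black box (to be applied with $E=L_*$, $K=K_T$, and $\mathcal T=\mathcal M$ on $\overline{\Omega}_T$), in the paper it suffices to invoke a standard reference rather than reproduce the argument.
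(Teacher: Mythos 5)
The paper states the Schauder fixed point theorem without proof and uses it as a classical black box (applied later with $E=L_*$, $K=K_T$, $\mathcal T=\mathcal M$), so there is no in-paper argument to compare against. Your proposal is the standard textbook proof via Brouwer's theorem and the Schauder projection, and it is correct in all details: compactness of $K$ yields the $1/n$-nets; convexity of $K$ gives $C_n\subseteq K$; $C_n$ is compact and convex in a finite-dimensional subspace; the denominator in $P_n$ is strictly positive because the net is $1/n$-dense, so $P_n$ is well defined and continuous, and $P_n(x)$ is a convex combination of net points within $1/n$ of $x$, giving $\|P_n(x)-x\|<1/n$; Brouwer produces $y_n\in C_n$ with $P_n(\mathcal T(y_n))=y_n$; and the final compactness-plus-continuity limit argument is sound. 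Your closing remark is also apt: for the paper's purposes the theorem should simply be cited, as it is.
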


The main step  is to verify the two assumptions of Theorem \ref{sch}, that is
\bigskip
\begin{enumerate}[i.]
    \item \emph{The continuity of the map $\mathcal{M}:K_T\to K_T.$}\smallskip\\
    This can be verified using the same argument as in \cite{CHL20}. The idea is to use the change of coordinates and the semi-linear system introduced previously along with the regularity of the transformation that preserves the continuity of the map. We refer the reader to \cite{CHL20}.
    \bigskip
    \item \emph{The inclusion $\mathcal{M}(K_T)\subset K_T.$
    }
 \end{enumerate}
 Now we prove (ii).\\
 
 The following proposition, with the help of \cite[Propositions 1.1 and 1.2]{MS}, is the key estimate to show that for any $T>0$ we have the inclusion \[\mathcal{M}(J):K_T\to K_T.\]

To state the proposition, let $G\in L^\infty(\Omega_T)$ and $f\in$ $L^\infty((0,T),L^2(\R))$ and define
\begin{align*}
    M_{G}(x,t):=\int_0^t\int_{\R}\Gamma(x,t;\xi,\tau)G(\xi,\tau)\,d\xi\,d\tau,
\end{align*}
\begin{align*}
M_{f}(x,t):=\int_0^t\int_{\R}\Gamma(x,t;\xi,\tau)f(\xi,\tau)\,d\xi\,d\tau.
\end{align*}
and
\begin{align*}
    M_{G,x}(x,t):=\int_0^t\int_{\R}\Gamma_x(x,t;\xi,\tau)G(\xi,\tau)\,d\xi\,d\tau,
\end{align*}
\begin{align*}
M_{f,x}(x,t):=\int_0^t\int_{\R}\Gamma_x(x,t;\xi,\tau)f(\xi,\tau)\,d\xi\,d\tau.
\end{align*}
\begin{prop}\label{mainlemma} Assume $f$ and $G$ satisfy the bounds in \eqref{Gfbound}. For any given $T>1$, we have
$$\displaystyle \max\{\|M_{G,x}(x,t)\|_{N_ T},\|M_{f,x}(x,t)\|_{N_ T}\}\leq \max\{C_0,C_0^2\}T^2.$$

\end{prop}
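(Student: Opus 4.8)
\textbf{Proof plan for Proposition \ref{mainlemma}.}

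The plan is to assemble the claimed $N_T$ bound directly from the six estimates displayed in \eqref{infbound}, \eqref{alphbound}, \eqref{L2bound}, using the definition of the $N_T$ norm as the maximum of an $L_*^\infty$, an $L_*^2$, and a $C_*^\alpha$ component. First I would recall that $N_T$ is defined with the exponential weight $e^{-\lambda t}$, so each of the three weighted norms is dominated (up to the choice of $\lambda$, and in fact for $\lambda\ge 0$) by its unweighted counterpart on $\overline{\Omega}_T$; thus it suffices to bound the unweighted $L^\infty(\Omega_T)$, $L^2(\Omega_T)$ and $C^\alpha(\Omega_T)$ norms of $M_{G,x}$ and of $M_{f,x}$. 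This reduction is where one invokes the equivalence of the $N_T$ norm and the $C^\alpha\cap L^2\cap L^\infty$ norm for fixed $T$, which the excerpt has already asserted.

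Next I would treat $M_{G,x}$ and $M_{f,x}$ in parallel. For $M_{G,x}$: the $L^\infty$ bound is $\lesssim C_0 T^{1/2}$ by \eqref{infbound}, the $C^\alpha$ bound is $\lesssim C_0 T^r$ with $0<r<1$ by \eqref{alphbound}, and the $L^2$ bound is $\le C_0^2 T^{3/2}$ by \eqref{L2bound}. For $M_{f,x}$: the $L^\infty$ bound is $\lesssim C_0 T^{1/4}$, the $C^\alpha$ bound is $\lesssim C_0 T^s$ with $0<s<1$, and the $L^2$ bound is $\le C_0 T^{1/2}$. Taking the maximum over the three components in each case, and using that for $T>1$ every one of the exponents $1/2,\ r,\ 3/2,\ 1/4,\ s$ is at most $2$, each of the six quantities is bounded by $\max\{C_0,C_0^2\}\,T^2$. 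One should absorb the implied constants in $\lesssim$ into the constant already present in $C_0$ (or simply note that the estimates of \cite{MS} quoted in \eqref{infbound}--\eqref{L2bound} already carry their constants, and one may enlarge $C_0$ at the outset so that these hold without a further multiplicative constant); this bookkeeping is the only mildly delicate point, and it is precisely the reason the statement is phrased with the generous power $T^2$ and the crude constant $\max\{C_0,C_0^2\}$.

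I do not expect a genuine obstacle here: the proposition is a packaging statement whose entire content is carried by the kernel estimates \eqref{Gammaest}, \eqref{Gamma_xest} and the more refined bounds imported from \cite{MS}. The one thing to be careful about is consistency of hypotheses—$G$ is controlled in $L^\infty(\Omega_T)$ while $f$ is controlled only in $L^\infty((0,T),L^2(\R))$, and the $L^2(\Omega_T)$ bound for $M_{G,x}$ in \eqref{L2bound} additionally uses $\|G_x\|_{L^\infty((0,T),L^1(\R))}$, which is why that line produces the $C_0^2$ rather than $C_0$; this is exactly the asymmetry that forces $\max\{C_0,C_0^2\}$ into the final bound, and it must be tracked honestly rather than glossed over. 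Once the reduction to unweighted norms is in place and the six estimates are lined up, the conclusion follows by taking the maximum, so the write-up is short.
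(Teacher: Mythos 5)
Your plan is correct and matches the paper's intended argument: the proposition is indeed just the assembly of the six estimates \eqref{infbound}–\eqref{L2bound}, combined with the fact that the weighted $N_T$ components are bounded by the unweighted $L^\infty$, $L^2$, $C^\alpha$ norms (the paper phrases this as the "equivalence" of the two norms for fixed $T$), and $T>1$ to dominate every exponent by $T^2$. You also correctly identify the $L^2(\Omega_T)$ estimate for $M_{G,x}$ as the source of the $C_0^2$ in the final constant, which is exactly the asymmetry the paper's bound is designed to accommodate.
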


\begin{proof}
The proof follows directly from
the following estimates that are proven in \cite[Sections 2 and 4]{MS}
\begin{align}\label{infbound}
\begin{split}
\|M_{G,x}\|_{L^\infty(\Omega_ T)}&\lesssim  T^{1/2}\|G\|_{L^\infty(\Omega_T)},\\
\|M_{f,x}\|_{L^\infty(\Omega_ T)}&\lesssim  T^{1/4}\|f\|_{L^\infty((0,T),L^2(R))},
\end{split}
\end{align}
\begin{align}\label{alphbound}
\begin{split}
\|M_{G,x}\|_{C^\alpha(\Omega_ T)}&\lesssim  T^{r}\|G\|_{L^\infty(\Omega_T)},\\
\|M_{f,x}\|_{C^\alpha(\Omega_ T)}&\lesssim  T^{s}\|f\|_{L^\infty((0,T),L^2(R))},
\end{split}
\end{align}
for some fixed $0<r,s<1.$ And
\begin{align}\label{L2bound}
\begin{split}
 \|M_{G,x}\|_{L^2(\Omega_ T)}&\leq \frac{1}{2} T^{3/2} \|G\|_{L^\infty(\Omega_T)}\|G_x\|_{L^\infty((0,T),L^1(\R))},\\
    \|M_{f,x}\|_{L^2(\Omega_ T)}&\leq  T^{1/2}\|f\|_{L^\infty((0,T),L^2(R))} 
\end{split}
\end{align}
Now by \eqref{Gfbound},
\begin{align}\label{infbound2}
\begin{split}
\|M_{G,x}\|_{L^\infty(\Omega_ T)}\leq C_0T^{1/2}, \quad 
\|M_{f,x}\|_{L^\infty(\Omega_ T)}\leq C_0T^{1/4},
\end{split}
\end{align}
\begin{align}\label{alphbound2}
\begin{split}
\|M_{G,x}\|_{C^\alpha(\Omega_ T)}\leq C_0 T^r, \quad 
\|M_{f,x}\|_{C^\alpha(\Omega_ T)}\leq C_0 T^s,
\end{split}
\end{align}
\begin{align}\label{L2bound2}
\begin{split}
 \|M_{G,x}\|_{L^2(\Omega_ T)}\leq C^2_0 T^{3/2}, \quad 
    \|M_{f,x}\|_{L^2(\Omega_ T)}\leq C_0T^{1/2}. 
\end{split}
\end{align}


Using these estimates and the equivalence of $N_T$ norm and $C^\alpha\cap L^2\cap L^\infty$ norm when $T$ is given, we obtain the desired estimate.
\end{proof}

The map $\mathcal{M}$ in \eqref{5.6} contains two terms in the integration. The first term is $F^J:=G+f$ and the second term is $\displaystyle g'(\theta^J)\theta^J_\xi J\in L^\infty((0,T),L^2(\R)).$ Proposition \ref{mainlemma} gives a uniform bound on the first term, where the bound is less than $k_T,$ chosen in \eqref{kT}.

To control the second term, a special treatment needed due to the extra explicit dependence on $J.$ 
%
Denote
\[Q(x,t):=\int_0^t\int_\R \Gamma_x(x,t,\xi,\tau)g'\theta_\xi(\xi,\tau)J(\xi,\tau)\,d\xi d\tau.\]
We write
\begin{align*}
    \big|\int_0^t\int_\R& e^{\lambda\tau} \Gamma_x(x,t,\xi,\tau)g'\theta_\xi(\xi,\tau)e^{-\lambda\tau}J(\xi,\tau)\,d\xi d\tau\big|\\
    &\lesssim \|e^{-\lambda\tau} J\|_{L^\infty(\Omega_T)}\,\mathcal{E}(0) \bigg[\int_0^t\int_\R(t-\tau)^{3/4} \Gamma_x^2(x,t,\xi,\tau)\,d\xi\,d\tau\bigg]^{1/2}\bigg[\int_0^t\frac{e^{2\lambda\tau}}{(t-\tau)^{3/4}}\,d\tau\bigg]^{1/2}\\
    &\leq \|e^{-\lambda\tau} J\|_{L^\infty(\Omega_T)}\,\mathcal{E}(0)\,t^{1/8}\bigg[\int_0^t\frac{e^{2\lambda\tau}}{(t-\tau)^{3/4}}\,d\tau\bigg]^{1/2}\\
    &=\|e^{-\lambda\tau} J\|_{L^\infty(\Omega_T)}\,\mathcal{E}(0)\,t^{1/8}\bigg[\int_0^t\frac{e^{2\lambda(t-\tau)}}{\tau^{3/4}}\,d\tau\bigg]^{1/2},
\end{align*}
where we used \eqref{Gamma_xest}.
Multiplying by $e^{-\lambda t},$ we get
\begin{align*}
    \big|e^{-\lambda t}\int_0^t\int_\R& e^{\lambda\tau} \Gamma_x(x,t,\xi,\tau)g'\theta_\xi(\xi,\tau)e^{-\lambda\tau}J(\xi,\tau)\,d\xi d\tau\big|\\
    &\leq \|e^{-\lambda\tau} J\|_{L^\infty(\Omega_T)}\mathcal{E}(0)\,t^{1/8}\bigg[\int_0^t\frac{e^{-2\lambda\tau}}{\tau^{3/4}}\,d\tau\bigg]^{1/2}\\
    &=\|e^{-\lambda\tau} J\|_{L^\infty(\Omega_T)}\mathcal{E}(0)\,t^{1/8}\bigg[\int_0^{1/\lambda}\frac{e^{-2\lambda\tau}}{\tau^{3/4}}\,d\tau+\int_{1/\lambda}^{t}\frac{e^{-2\lambda\tau}}{\tau^{3/4}}\,d\tau\bigg]^{1/2}\\
    &\leq \|e^{-\lambda\tau} J\|_{L^\infty(\Omega_T)}\mathcal{E}(0)\,t^{1/8}\bigg[\frac{1}{\lambda^{1/4}}+\lambda^{3/4}\frac{1}{\lambda}(e^{-1}-e^{-\lambda T})\bigg]^{1/2}\\
    &\leq \sqrt{2} \|e^{-\lambda\tau} J\|_{L^\infty(\Omega_T)}\mathcal{E}(0)\,T^{1/8}\frac{1}{\lambda^{1/8}},
\end{align*}
which yields, for $J\in K_T,$
\begin{align}
    \|Q\|_{\bar L^\infty(\Omega_T)}\leq C_T \frac{1}{\lambda^{1/8}} \|J\|_{\bar L^\infty(\Omega_T)}\mathcal{E}(0)\leq \sqrt{2}\,T^{1/8} \frac{1}{\lambda^{1/8}} k_T\mathcal{E}(0).
\end{align}
Hence, choosing $\lambda>(2\sqrt{2})^8T\mathcal{E}^8(0),$ we obtain the bound
\begin{align}\label{Linf*}
  \|Q\|_{\bar L^\infty(\Omega_T)}\leq k_T/2.  
\end{align}
For the $\bar L^2$ estimate, using \eqref{Gamma_xest} and \eqref{L2bound}

\begin{align*}
  e^{-\lambda t}|Q(x,t)|&\leq e^{-\lambda t}\int_0^t\int_\R \frac{1}{{t-\tau}}e^{-\frac{d(x-\xi)^2}{4(t-\tau)}}|g'||\theta_\xi(\xi,\tau)||J(\xi,\tau)|\,d\xi d\tau\\
  &\leq \|g'\|_{L^\infty} \|J\|_{\bar L^\infty}\int_0^t\int_\R \frac{1}{{t-\tau}}e^{-\frac{d(x-\xi)^2}{4(t-\tau)}}|\theta_\xi(\xi,\tau)|e^{\lambda(\tau-t)}\,d\xi d\tau\\
&=\|g'\|_{L^\infty}\|J\|_{\bar L^\infty}\int_0^t\int_\R \frac{1}{{\tau}}e^{-\frac{d\,\xi^2}{4\tau}}|\theta_\xi(x-\xi,t-\tau)|e^{-\lambda\tau}\,d\xi d\tau.
\end{align*}
Taking the $L^2$ in $x,$
\begin{align*}
    \|e^{-\lambda t}Q(x,t)\|_{L^2(\R)}&\leq \|g'\|_{L^\infty} \|J\|_{\bar L^\infty(\Omega_T)} \|\theta_\xi\|_{L^\infty((0,T),L^2(\R))}\int_0^t\frac{1}{\sqrt{\tau}}e^{-\lambda \tau}\,d\tau\\
    &\leq \|g'\|_{L^\infty}\|J\|_{\bar L^\infty} \mathcal{E}(0)\frac{3}{\sqrt{\lambda}}.
\end{align*}
This gives
\begin{align}
    \|Q\|_{\bar L^2(\Omega_T)}\leq \|g'\|_{L^\infty}k_T \mathcal{E}(0)\frac{3}{\sqrt{\lambda}}\,\sqrt{T}.
\end{align}
Hence, for $\lambda>36\,\mathcal{E}(0)^2\|g'\|_{L^\infty}^2T$ we have
\begin{align}\label{L2*}
    \|Q\|_{\bar L^2(\Omega_T)}\leq k_T/2.
\end{align}
By a similar argument one can show the existence of $\lambda>0$ depending only on $\mathcal{E}(0),\,T$ and bound of $g$ and $g'$ such that
\begin{align}\label{alpha*}
    \|Q\|_{\bar C^\alpha(\Omega_T)}\leq k_T/2.
\end{align}
\begin{remark}
    The weight $e^{-\lambda t}$ introduced in the norm $N_T$ helps to get the inclusion of the map $\mathcal{M},$ particularly, for the term $Q$ treated above. One can see, for example, the $\bar L^\infty$ estimate; we got \[\|Q\|_{\bar L^\infty}\leq C_T\frac{1}{\lambda^{1/8}} \|J\|_{\bar L^\infty}\mathcal{E}(0).\]
    This allows us to choose $\lambda=\lambda(T,\mathcal{E}(0))$ such that $\|Q\|_{\bar L^\infty}<\frac{1}{2}  \|J\|_{\bar L^\infty}.$ In other words, the size of $\|Q\|_{N_T}$ shrinks faster than the size of $\|J\|_{N_T}$ as $\lambda$ gets large.
\end{remark}

Now, Proposition \ref{mainlemma} with \eqref{Linf*},\eqref{L2*} and \eqref{alpha*} show that for any fixed $T>1,$
there exists $\lambda>0$ large enough (depending only on $T, \mathcal{E}(0), \|g\|_{L^\infty}$ and $\|g'\|_{L^\infty}$)
such that $$\mathcal{M}(K_T)\subset K_T.$$ Hence, by the Schauder fixed point theorem, we have a fixed point $J^*=\mathcal{M}(J^*)\in K_T.$ Since for a fixed finite time $T$ the $N_T$ norm and the $L^2\cap L^\infty\cap C^\alpha(\Omega_T)$ norm are equivalent,  $J^*\in L^2\cap L^\infty\cap C^\alpha.$ Therefore, we have the existence of $A(x,t)$ by \eqref{Aexpression}. 
\subsection{The Weak Formulation:}
The next proposition shows that the weak formulation is well-defined. Before that we will introduce $A^\ve(x,t)$ to be the solution to
\begin{align}\label{Aepsilon}
    A^\ve_t-g(\theta)A^\ve_{xx}+\gamma_1A^\ve=F^{\epsilon}+g'\theta_x^\epsilon J.
\end{align}
Here $F^\epsilon,$ defined in \eqref{Fun}, and $\theta_x^\epsilon$ are smooth mollification where the mollification of $F$ is acting only on $\theta_x$ and $\theta_t$. Precisely, $F^\epsilon=F(\theta,\theta_t^\epsilon,\theta_x^\epsilon,v_x)$. The mollification is not needed for $v_x$ since we have shown $\theta\in C^{1/2}(\Omega_T)$ and $v_x\in C^\alpha(\Omega_T)$. Hence, the classical theory discussed in Section \ref{HK} applies.

\begin{prop}\label{propA}
For any $T\in (0,\infty),$ there exists a function $A(x,t)$ such that $$A_x\in L^2\cap L^\infty\cap C^\alpha([0,T],\R)$$
and
\begin{align}\label{Aweak}
    \int_0^T\int_\R A\phi_t-g(\theta) A_x\phi_x-\gamma_1 A\phi\,dx\,dt=-\int_0^T\int_\R (F+g'(\theta)\theta_x J_0)\phi\,dx\,dt,
\end{align}
for any $\phi\in C^\infty_c(\R\times (0,T)).$ More precisely, the weak formulation \eqref{Aweak} is satisfied as a limit of $A^\ve.$
\end{prop}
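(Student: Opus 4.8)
The plan is to obtain $A$ as the $L^2(\Omega_T)$-limit of the regularized solutions $A^\varepsilon$ of \eqref{Aepsilon}, in exact parallel with the treatment of $v^J$ in Proposition \ref{propv}. First I would note that since $\theta\in C^{1/2}(\Omega_T)$, the mollified data $F^\varepsilon=F(\theta,\theta^\varepsilon_t,\theta^\varepsilon_x,v_x)$ and $g'(\theta)\theta^\varepsilon_x J$ are H\"older continuous on $\overline\Omega_T$, so by the classical theory recalled in Section \ref{HK} (specifically \cite[Theorem 12]{Fri}) the Cauchy problem \eqref{Aepsilon} with $A^\varepsilon(x,0)=0$ has a classical solution given by the parametrix representation
\[
A^\varepsilon(x,t)=\int_0^t\!\!\int_\R \Gamma(x,t;\xi,\tau)\big[F^\varepsilon(\xi,\tau)+g'(\theta)\theta^\varepsilon_\xi J\big](\xi,\tau)\,d\xi\,d\tau,
\]
with $\Gamma$ the kernel of $\mathcal L^J$. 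Being classical, $A^\varepsilon$ satisfies the weak formulation \eqref{Aweak} with $F$, $\theta_x$ replaced by $F^\varepsilon$, $\theta^\varepsilon_x$; after an integration by parts the $\phi_x$ term reads $-g(\theta)A^\varepsilon_x\phi_x$, and I would record the companion representation for $A^\varepsilon_x$ obtained by differentiating the formula above, replacing $\Gamma$ by $\Gamma_x$.

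Next I would pass to the limit $\varepsilon\to 0$. Since $\theta^\varepsilon_t\to\theta_t$ and $\theta^\varepsilon_x\to\theta_x$ in $L^2(\Omega_T)$ (indeed in $L^\infty((0,T),L^2(\R))$ up to the standard mollifier estimates) and the coefficients appearing in $f$ and $G$ are bounded smooth functions of $\theta$, one checks that $F^\varepsilon\to F$ with $G^\varepsilon\to G$ in, say, $L^2(\Omega_T)$ and $f^\varepsilon\to f$ in $L^\infty((0,T),L^2(\R))$, and similarly $g'(\theta)\theta^\varepsilon_x J\to g'(\theta)\theta_x J$ in $L^\infty((0,T),L^2(\R))$ using $J\in L^\infty$. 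Applying the kernel bounds \eqref{Gammaest}, \eqref{Gamma_xest}, Proposition \ref{GammaL2}, and Young's convolution inequality exactly as in the proof of Proposition \ref{propv}, I get
\[
\|A^\varepsilon-A\|_{L^2(\Omega_T)}\lesssim \|H\|_{L^1}\,\|F^\varepsilon-F+g'(\theta)(\theta^\varepsilon_x-\theta_x)J\|_{L^2(\Omega_T)}\to 0,
\]
and the analogous estimate with $\Gamma_x$, $\tfrac{1}{\sqrt t}H$ in place of $\Gamma$, $H$ giving $\|A^\varepsilon_x-A_x\|_{L^2(\Omega_T)}\to 0$, where $A$ is precisely the function \eqref{Aexpression} furnished by the fixed point $J=J^*$. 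Passing to the limit in the weak identity for $A^\varepsilon$ then yields \eqref{Aweak}; the only point to watch is that $\gamma_1 A_0$ sits inside $G$ (see \eqref{Fun}) so that the right-hand side of \eqref{Aweak} is written with $F+g'(\theta)\theta_x J_0$ after one reconciles $J=A_x+J_0$. The regularity statement $A_x\in L^2\cap L^\infty\cap C^\alpha([0,T],\R)$ is then exactly Proposition \ref{mainlemma} together with the bounds \eqref{Linf*}, \eqref{L2*}, \eqref{alpha*} on the $Q$-term, already proved above.

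The main obstacle is the convergence $F^\varepsilon\to F$ in the right topology: $F$ contains the genuinely rough quadratic term $\theta_t^2$ (via $G$) and the product $g'(\theta)\theta_x J$, neither of which is better than $L^\infty_tL^1_x$ a priori, so one must be careful that mollification converges in $L^2(\Omega_T)$ for $G$ and only in $L^\infty((0,T),L^2(\R))$ for $f$ — which is exactly the reason the splitting $F=G+f$ was set up in \eqref{Fun} — and that the kernel estimates of Proposition \ref{GammaL2} and \cite{MS} are applied with the matching norm in each case. Everything else is a routine repetition of the limiting argument already carried out for $v^J$ in Proposition \ref{propv}, so I would state it concisely and refer back to that proof for the details that are identical.
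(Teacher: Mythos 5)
Your proposal matches the paper's proof: both construct $A$ as the limit of the parametrix solutions $A^\varepsilon$ of the mollified equation \eqref{Aepsilon}, both invoke the kernel estimates of Proposition \ref{GammaL2} and the argument of Section \ref{J2v} for the $L^2$ convergence of $A^\varepsilon,A_x^\varepsilon$, both use $J^*=A_x+J_0$ to recast the right-hand side, and both cite Proposition \ref{mainlemma} for the regularity of $A_x$. The only difference is that you spell out the split topologies ($G$ in $L^2$, $f$ in $L^\infty_tL^2_x$) that the paper leaves implicit by referring back to Section \ref{J2v}; that is a faithful unpacking, not a deviation.
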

\begin{proof}
First, for short, we denote $F^{J^*}$ by $F$ and $\theta^{J^*}$ by $\theta.$
The fixed point $J^*$ gives
\begin{align}\label{Express4A}
A(x,t)=\int_0^t\int_\R\Gamma(x,t;\xi,\tau)\big[F+g'(\theta)\theta_\xi J^*\big]\,d\xi\,d\tau.
\end{align}
Now we show that $A$ satisfies \eqref{Aweak}.
We can write the classical solution $A^\epsilon$ of \eqref{Aepsilon} explicitly as
\begin{align}
    A^\epsilon(x,t)=\int_0^t\int_\R\Gamma(x,t;\xi,\tau)\big[F^{\epsilon}+g'\theta_\xi^{\epsilon}J^*\big]\,d\xi\,d\tau.
\end{align}
The weak formulation of the solution to \eqref{Aepsilon} is defined as the following
\begin{align}\label{eweak}
    \int_0^T\int_\R A^\epsilon \phi_t- A^\epsilon_x(g\phi)_x-\gamma_1 A^\epsilon\phi\,dx\,dt=-\int_0^T\int_\R \big[F^{\epsilon}+g'\theta_x^{\epsilon}J^*\big]\phi\,dx\,dt
\end{align}
since $A^\epsilon$ is a classical solution.
The same argument as in Section \ref{J2v} can be applied here to show that $A^\ve, A_x^\ve\to A,A_x$ in the $L^2_{loc}$ sense. This means
taking $\epsilon\to 0$ in \eqref{eweak} and using $J^*-J_0=A_x$ we obtain \eqref{Aweak}.

An application of Proposition \ref{mainlemma} shows 

$$\displaystyle A_x\in L^\infty\cap L^2\cap C^\alpha([0,T],\R).$$ 
\end{proof}
\begin{cor}\label{hatweak}
The function $\hat A(x,t):=A(x,t)+A_0(x)$ satisfies the following identity
  \begin{align}
    \int_0^T\int_\R \hat A \phi_t-g(\theta)\hat A_x\phi_x-\gamma_1 \hat A\phi\,dx\,dt=&-\int_0^T\int_\R \hat F\phi\,dx\,dt
\end{align} 
where
\begin{align}\begin{split}
\hat F&=\hat f+\hat G,\\
\hat f&=[\gamma_1-\frac{h^2}{g}]v_x+\frac{h(\theta)c^2(\theta)}{g}\theta_x,\\
\hat G&=\int_{-\infty}^x[\frac{h'}{g}-\frac{g'h}{g^2}]\theta_t^2-[\gamma_1-\frac{h^2}{g}]'\theta_zv_z-(\frac{h(\theta)c(\theta)}{g})'c(\theta)\theta_z^2\,dz.
    \end{split}  
\end{align}
\end{cor}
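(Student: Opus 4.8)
The plan is to obtain the identity for $\hat A$ by a direct manipulation of the weak formulation \eqref{Aweak} already established for $A$ in Proposition \ref{propA}, using crucially that $A_0=A_0(x)$ is independent of $t$. Since $\hat A=A+A_0$ and $\hat A_x=A_x+A_0'=A_x+J_0$, for any $\phi\in C^\infty_c(\R\times(0,T))$ the difference between the left-hand sides of the two identities (the one for $\hat A$ and the one for $A$) is exactly
\begin{align*}
\int_0^T\int_\R A_0\phi_t-g(\theta)J_0\phi_x-\gamma_1 A_0\phi\,dx\,dt,
\end{align*}
so the corollary reduces to evaluating this quantity and matching it against $-\int_0^T\int_\R(\hat F-F-g'(\theta)\theta_x J_0)\phi\,dx\,dt$.

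The first term vanishes: because $A_0$ does not depend on $t$ and $\phi(\cdot,0)=\phi(\cdot,T)=0$, Fubini gives $\int_0^T\int_\R A_0\phi_t\,dx\,dt=\int_\R A_0(x)\big(\phi(x,T)-\phi(x,0)\big)\,dx=0$. For the second term I would integrate by parts in $x$; since $\phi$ has compact support,
\begin{align*}
-\int_0^T\int_\R g(\theta)J_0\phi_x\,dx\,dt=\int_0^T\int_\R\big(g(\theta)J_0\big)_x\phi\,dx\,dt=\int_0^T\int_\R\big(g'(\theta)\theta_x J_0+g(\theta)J_0'\big)\phi\,dx\,dt.
\end{align*}
This is legitimate because $J_0\in H^1(\R)$ by \eqref{tecinitial}, $\theta_x(\cdot,t)\in L^2(\R)$ uniformly in $t$, and $g,g'$ are bounded (Proposition \ref{propth}), so that $g(\theta)J_0\in L^2$ has weak $x$-derivative $g'(\theta)\theta_x J_0+g(\theta)J_0'\in L^1_{\mathrm{loc}}$, and the integration by parts is justified by the standard mollification argument. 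Consequently the displayed quantity equals $\int_0^T\int_\R\big(g'(\theta)\theta_x J_0+g(\theta)J_0'-\gamma_1 A_0\big)\phi\,dx\,dt$.

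Substituting this into \eqref{Aweak} yields
\begin{align*}
\int_0^T\int_\R\hat A\phi_t-g(\theta)\hat A_x\phi_x-\gamma_1\hat A\phi\,dx\,dt=-\int_0^T\int_\R\big(F-g(\theta)J_0'+\gamma_1 A_0\big)\phi\,dx\,dt,
\end{align*}
so it only remains to recognize, from the definitions in \eqref{Fun}, that $F-g(\theta)J_0'+\gamma_1 A_0=\hat F$: indeed $f-g(\theta)J_0'=\hat f$ since $f$ contains the summand $g(\theta)J_0'$, and $G+\gamma_1 A_0=\hat G$ since $G$ contains the summand $-\gamma_1 A_0$, whence $\hat F=\hat f+\hat G$. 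The only place requiring genuine care in the whole argument is the spatial integration by parts under the limited regularity of $\theta_x$ and $J_0$; the rest is bookkeeping with the definitions of $f$, $G$, $\hat f$, $\hat G$ and the time-independence of $A_0$.
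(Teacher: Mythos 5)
Your argument is correct and follows essentially the same route as the paper: substitute $A=\hat A-A_0$ into the weak identity \eqref{Aweak}, use that $A_0$ is time-independent (so the $\int_0^T\int_\R A_0\phi_t\,dx\,dt$ term vanishes), integrate $\int_0^T\int_\R g(\theta)J_0\phi_x\,dx\,dt$ by parts in $x$ to produce $g'(\theta)\theta_x J_0+g(\theta)J_0'$, and then recognize from \eqref{Fun} that $F-g(\theta)J_0'+\gamma_1 A_0=\hat F$. The paper's own proof compresses all of this into a single two-line display; your version merely spells out the intermediate steps and supplies the regularity justification for the spatial integration by parts that the paper leaves implicit.
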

\begin{proof}
Using $A=\hat A(x,t)-A_0$ and \eqref{Aweak}, we have
\begin{align}
    \int_0^T\int_\R \hat A \phi_t-g(\theta)\hat A_x\phi_x-\gamma_1 \hat A\phi\,dx\,dt=&\int_0^T\int_\R g(\theta) J'_0\phi-\gamma_1 A_0\phi-F\phi\,dx\,dt\nonumber\\
    =&-\int_0^T\int_\R \hat F\phi\,dx\,dt\nonumber
\end{align}    
\end{proof}


\section{Weak formulations}\label{bdE}
To summarize, from Propositions \ref{propth}, \ref{propv}, \ref{propA} and Corollary \ref{hatweak} we have proved that $(\theta,v,A)$ defines a weak solution in the sense that, for any test function $\phi\in H_0^1((0,T)\times\R)$,
     \begin{align}\label{thetaweakk}
    \int_0^T\int_\R\theta_t\phi_t-\big(\gamma_1-\frac{h^2}{g}\big)\theta_t\phi\,dx\,dt=\int_0^T\int_\R (c(\theta)\theta_x)(c(\theta)\phi)_x+h\hat A_x\phi\,dx\,dt,
\end{align}
\begin{align}\label{vvweak}
    \int_0^T\int_\R v\phi_t-v_x(g\phi)_x+h\theta_t\phi\,dx\,dt=0,
\end{align}
\begin{align}\label{AAweak}
    \int_0^T\int_\R \hat A \phi_t-g(\theta)\hat A_x\phi_x-\gamma_1 \hat A\phi\,dx\,dt=-\int_0^T\int_\R \hat F\phi\,dx\,dt.
\end{align}

Recall,
\begin{align}\begin{split}
\hat F&=\hat f+\hat G,\\
\hat f&=[\gamma_1-\frac{h^2}{g}]v_x+\frac{h(\theta)c^2(\theta)}{g}\theta_x,\\
\hat G&=\int_{-\infty}^x[\frac{h'}{g}-\frac{g'h}{g^2}]\theta_t^2-[\gamma_1-\frac{h^2}{g}]'\theta_zv_z-(\frac{h(\theta)c(\theta)}{g})'c(\theta)\theta_z^2\,dz.
    \end{split}  
\end{align}

Now we show that $(u,\theta)$ with $u=v_x$ satisfies the requirement in definition \eqref{def1} for a weak solution.
By \eqref{vvweak}, for any test function $\eta\in C_c^\infty(\Omega_T)$, choose $\phi=\eta_x$,  we have 
\begin{align}
    \int_0^T\int_\R u\eta_t-(gu_x+h\theta_t)\eta_x\,dx\,dt=0.
\end{align}

Next we establish the relation between $v_t$ and $J$. Precisely, we show that
\begin{align}
    J-\frac{v_t}{g}=0\;\mbox{ almost everywhere.}
\end{align}
Taking $\displaystyle\phi=\frac{\psi_t}{g}$ in \eqref{vweak}, we have
\begin{align}\label{0}
    \int_0^T\int_\R -v_t\frac{\psi_t}{g}-v_t \psi_{xx}+h\theta_t \frac{\psi_t}{g}\,dx\,dt=0,
\end{align}
which is 
\begin{align}\label{1}
    \int_0^T\int_\R \frac{v_t}{g}(\psi_t+g\psi_{xx})-(\frac{h\theta_t }{g}) \psi_t\,dx\,dt=0.
\end{align}
By \eqref{thetaweakk}, if we choose $\displaystyle \phi=\frac{h}{g}\psi,$ then 
 \begin{align}\label{2}\begin{split}
  &  \int_0^T\int_\R\theta_t\frac{h}{g}\psi_t+(\frac{h}{g})'\theta^2_t\psi-\big(\gamma_1-\frac{h^2}{g}\big)\theta_t\frac{h}{g}\psi\,dx\,dt\\
  &\qquad  =\int_0^T\int_\R (c\theta_x)(c\frac{h}{g}\psi)_x+hJ\frac{h}{g}\psi\,dx\,dt.
  \end{split}
\end{align}
Finally, for \eqref{AAweak}, we choose $\phi=\psi_x$ to get 
\begin{align}\label{3}
    &\iint -J\psi_t-gJ\psi_{xx}+\gamma_1J\psi\,dx\,dt\\
    &\qquad =\iint (\frac{h}{g})'\theta_t^2\psi-[\gamma_1-\frac{h^2}{g}] v_{xx}\psi
    -(\frac{hc}{g})'c\theta_x^2\psi-\frac{hc^2}{g}\theta_x\,dx\,dt.\nonumber
\end{align}
Adding \eqref{1}-\eqref{3}, and using that 
\[
\iint (\frac{h^2}{g}-\gamma_1)\frac{1}{g}\psi(v_t-gv_{xx}-\theta_t h)\,dx\,dt=0,
\]
we obtain
\begin{align}\label{identity}
 \iint (J-\frac{v_t}{g})(\psi_t+g\psi_{xx}+(\frac{h^2}{g}-\gamma_1)\psi)\,dx\,dt=0.   
\end{align}
Denote $\displaystyle a:=J-\frac{v_t}{g}.$ Now we show that the weak solution of 
\[
a_t-(ag)_{xx}-\beta a=0\; \mbox{ where }\; \beta=\frac{h^2}{g}-\gamma_1
\]
with $a(x,0)=0$, has only zero solution almost everywhere. To prove that, let $\displaystyle m=ga$, so the problem becomes
\[
m_t-g m_{xx}-(\beta+\frac{g_t}{g})m=0
\]
with $m(x,0)=0$. We need to prove that $m=0$ almost everywhere is the only solution.
    Applying same arguments from Section \ref{HK}, we have an integral formula of the solution and we write the map of solutions as
\[\hat m(x,t)=\int_0^t\int_\R\Gamma(x,t;\xi,\tau)\big(\big[\beta+\frac{g_t}{g}\big]m\big)(\xi,\tau)\,d\xi\,d\tau.\]
Following the same fixed point arguments in Section \ref{FixedPoint}, we have the following estimate for some short time $t$ and some constant $r>0$
\[\|m\|_{L^\infty}<t^r \|m\|_{L^\infty}\]
since $m(x,0)=0$, so $m=0$, a.e. when $t$ is small enough, then for any $t$. Hence,
\[
J=\frac{v_t}{g} \quad \hbox{almost everywhere.}
\]
Furthermore, by \eqref{vvweak} and choosing $\phi=\frac{\varphi}{g}$, we have 
\begin{align}\label{4}
    \int_0^T\int_\R \frac{v_t}{g}\varphi+u_x\varphi+\frac{h\theta_t }{g}\varphi\,dx\,dt=0.
\end{align}
So 
\[
J=\frac{v_t}{g}=u_x+\frac{h\theta_t}{g} \quad \hbox{almost everywhere.}
\]
Note that $J\in K_T$,  $\theta$  satisfies \eqref{energybound} and $h/g$ is uniformly bounded. So we have $u_x\in L^{\infty}([0,T],L^2_{loc}(\Bbb R))\cap L^2([0,T],L^2(\R)).$ Hence we can prove \eqref{u_est} using \eqref{6.1}.



\appendix

\section{proof of proposition \ref{localwave}}\label{app-semilinear}

Similar to the work in \cite{CHL20}, we use the Schauder fixed point theorem to show the existence of a solution of system \eqref{semisys11}-\eqref{semisys55}. We apply the fixed point argument to the following maps obtained by integrating the equations \eqref{semisys11}-\eqref{semisys55}.

\beq\label{hattheta}
\hat{\theta}(X,Y)=\theta(X,\phi(X))+\int_{\phi(X)}^Y\frac{\sin z}{4c}q (X,\bar Y)d\bar Y,
\eeq
\beq\label{hatz}
\begin{split}
\hat{z}(X,Y)=&z(\phi^{-1}(Y),Y)+
\int_{\phi^{-1}(Y)}^X p\Big\{\frac{c'}{4c^2}(\cos^2\frac{w}{2}-\cos^2\frac{z}{2})\\
&+\frac{\frac{h^2(\theta)}{g(\theta)}-\gamma_1}{4c}(\sin w\cos^2\frac{z}{2}+\sin z\cos^2\frac{w}{2})
-\frac{h}{c}J(x_p,t_p)\cos^2\frac{z}{2}\cos^2\frac{w}{2}\Big\}(\bar X,Y)d\bar X ,
\end{split}
\eeq
\beq\label{hatw}
\begin{split}
\hat{w}(X,Y)=&w(X,\phi(X))+\int_{\phi(X)}^Y
q
\Big\{\frac{c'}{4c^2}(\cos^2\frac{z}{2}-\cos^2\frac{w}{2})\\
&+\frac{\frac{h^2(\theta)}{g(\theta)}-\gamma_1}{4c}(\sin w\cos^2\frac{z}{2}+\sin z\cos^2\frac{w}{2})
-\frac{h}{c}J(x_m,t_m)\cos^2\frac{z}{2}\cos^2\frac{w}{2}\Big\}(X,\bar Y)d\bar  Y ,
\end{split}
\eeq
\beq\label{hatp}
\begin{split}
\hat{p}(X,Y)=&p(X,\phi(X))+\int_{\phi(X)}^Y pq\left\{\frac{c'}{8c^2}(\sin z-\sin w)\right.\\
&\left.
+\frac{\frac{h^2(\theta)}{g(\theta)}-\gamma_1}{4c}[\frac{1}{4}\sin w\sin z+\sin^2 \frac{w}{2}\cos^2\frac{z}{2}]-\frac{h}{2c}J(x_m,t_m)\sin w\cos^2\frac{z}{2}\right\}(X,\bar Y)d\bar Y,
\end{split}
\eeq
\beq\label{hatq}
\begin{split}
\hat{q}(X,Y)=&q(\phi^{-1}(Y),Y)+\int_{\phi^{-1}(Y)}^X pq\left\{\frac{c'}{8c^2}(\sin w-\sin z)\right.\\
&\left.
+\frac{\frac{h^2(\theta)}{g(\theta)}-\gamma_1}{4c}[\frac{1}{4}\sin w\sin z+\sin^2 \frac{z}{2}\cos^2\frac{w}{2}]-\frac{h}{2c}J(x_p,t_p)\sin z\cos^2\frac{w}{2}\right\}(\bar X,Y)d\bar X,
\end{split}
\eeq

where it is important to express
\beq\label{xpdef}
x_p(\bar X,Y)=x(\bar X,\phi(\bar X))+\int_{\phi(\bar X)}^Y-\frac{1+ \cos z}{4}q d\tilde Y,
\eeq
\beq\label{tpdef}
t_p(\bar X,Y)=t(\bar X,\phi(\bar X))+\int_{\phi(\bar X)}^Y\frac{1+ \cos z}{4c}q d\tilde Y,
\eeq
\beq\label{xmdef}
x_m(X,\bar Y)=x(\phi^{-1}(\bar Y),\bar Y)+\int_{\phi^{-1} (\bar Y)}^X-\frac{1+ \cos w}{4}p d\tilde X,
\eeq
and
\beq\label{tmdef}
t_m(X,\bar Y)=t(\phi^{-1}(\bar Y),\bar Y)+\int_{\phi^{-1} (\bar Y)}^X-\frac{1+ \cos w}{4c}p d\tilde X.
\eeq

The initial line $t=0$ in the $(x,t)$-plane is transformed to a
parametric curve 
\begin{align*}
 \Gamma_0 :=   \big\{ (X,Y):\,~~Y=\varphi (X)\big\}\subset {\mathbb
R}^2.  
\end{align*}

Next, Denote $V=(\theta,z,\omega,p,q)$ and $\overline{V}(X)=(\theta,z,\omega,p,q)(X,\phi(X))$ or equivalently $\overline{V}(Y)=(\theta,z,\omega,p,q)(\phi^{-1}(Y),Y)$

Following the work in \cite{CHL20} we choose $X:=C_0(\Omega_T)$ and for fixed constant $b>0$
 define the set
$$B_ T:=\{V:\|V\|_{L^\infty\cap C^\alpha(\Omega_ T)}\leq b,V(\Gamma_0)=\overline V\},$$
where $\Omega_ T:=\big\{(X,Y)\in \R^2:\mbox{dist}((X,Y), \Gamma_0)\leq T \big\}.$

Now, using the Schauder fixed point theorem and following the work in \cite{CHL20}, for small enough $ T>0$ we have a fixed point, that is $(\theta,z,\omega,p,q)$ such that $(\hat\theta,\hat z,\hat\omega,\hat p,\hat q)=(\theta,z,\omega,p,q).$ This gives the local existence of solutions: Proposition \ref{localwave}.\\

\begin{remark}
The technique followed here for the wave, in fact, relies on the finite propagation of the solution. More precisely, we consider a bounded region such that the solution exists out of this region (the far field). We need to show the existence of a solution in this bounded region and then we glue it with the solution in the far field. We refer the reader to \cite{CHL20} for details.
\end{remark}

\section{A bound of the energy $E$}\label{Appenergy}

Apply the Green's Theorem  over the region $D_t$ in Figure \ref{fig_last} to get
\begin{align}\label{eqn3.2}\begin{split}
&\int_{\partial D_t}\frac{1-\cos w}{4}p\,dX-\frac{1-\cos z}{4}q\,dY\\
 &\qquad =-\frac{1}{4}\iint_{D_t}\big[((1-\cos z)q)_X+((1-\cos w)p)_Y\big]\,dXdY.
 \end{split}
\end{align}
Direct computation gives
\begin{align}\label{pplusq}
&((1-\cos z)q)_X+((1-\cos w)p)_Y\\\nonumber
&\quad =-\frac{pq}{c}(\sin \frac{w}{2}\cos \frac{z}{2}+\sin \frac{z}{2}\cos \frac{w}{2})^2-\frac{pq}{c}J (\sin z\cos^2\frac{w}{2}+\sin w\cos^2\frac{z}{2}).
\end{align}

Substituting it into \eqref{eqn3.2}, and using the transformation relation in \eqref{eqnxt}, the following inequality holds,
\beq\label{eqn3.3}
\begin{split}
&\int_a^b (\theta_t^2+c^2(\theta)\theta_x^2)(x,t)\,dx\\
&\quad =\int_{AB\cap (\cos w\neq -1)}\frac{1-\cos w}{4}p\,dX
+\int_{BA\cap (\cos z\neq -1)}\frac{1-\cos z}{4}q\,dY\\
&\quad \leq\int_{AB}\frac{1-\cos w}{4}p\,dX-\frac{1-\cos z}{4}q\,dY\\
&\quad =\int_{DC}\frac{1-\cos w}{4}p\,dX-\frac{1-\cos z}{4}q\,dY-\int_{DA}\frac{1-\cos w}{4}p\,dX\\ 
&\qquad-\int_{CB}\frac{1-\cos z}{4}q\,dY -\frac{1}{4}\iint_{D_t}\frac{pq}{c}(\sin \frac{w}{2}\cos \frac{z}{2}+\sin \frac{z}{2}\cos \frac{w}{2})^2\,dXdY\\
&\qquad -\frac{1}{4}\iint_{D_t}\frac{pq}{c}hJ (\sin z\cos^2\frac{w}{2}+\sin w\cos^2\frac{z}{2})\,dXdY\\
&\quad \leq  \int_{DC}\frac{1-\cos w}{4}p\,dX-\frac{1-\cos z}{4}q\,dY-2\iint_{\mathcal{D}}\theta^2_t\,dxdt-2\iint_{\mathcal{D}}hJ\theta_t\,dxdt\\
&\quad=\int_d^c (\theta_t^2+c^2(\theta)\theta_x^2)(x,0)\,dx-2\iint_{\mathcal{D}}\theta^2_t\,dxdt-2\iint_{\mathcal{D}}hJ\theta_t\,dxdt,
\end{split}
\eeq
where we have used the following fact in the second to the last step: 
\beq\notag
\left|\frac{\partial(X,Y)}{\partial(x,t)}\right|
=\left|
\begin{array}{cc}
\displaystyle X_x& \displaystyle X_t\\
\displaystyle Y_x& \displaystyle Y_t
\end{array}
\right|=-2cX_xY_x=\frac{8}{pq}\frac{1}{1+\cos w}\frac{1}{1+\cos z}.
\eeq

For any $0\leq t
\leq T$,
\[
E(t)\leq E(0)+4  \int_0^t\int_{-\infty}^\infty|J||\theta_t|\,dxdt,
\]
and hence, 
\beq
\frac{1}{2}\max_{0\leq t
\leq T}E(t)\leq E(0)+C_\ve\int_0^T\int_{-\infty}^\infty|J|^2\,dxdt
\eeq
for some constant $C_\ve\to \infty$ as $\epsilon\to 0.$. This implies that $\theta_t(\cdot, t)$ and $\theta_x(\cdot, t)$ are both square integrable functions in $x$, so do $R$ and $S$.

\section{Derivation of equation (\ref{AAeq}) for the quantity $A$}\label{App-A}

Denote
\[
\hat A(x,t)=\int_{-\infty}^xJ(z,t)\,dz, \qquad
A_0=\int_{-\infty}^xJ(z,0)\,dz,\qquad
A(x,t)=\hat A(x,t)-A_0.
\]
By \eqref{J_eq}, one can find 
\begin{align*}
    \hat A_t&={\int_{-\infty}^x\frac{(gJ)_t}{g}\,dz}-{\int_{-\infty}^x\frac{g'\theta_t}{g}J\,dz}\\
    &={(gJ)_x+\int^x_{-\infty}\frac{1}{g}\bigg[[g'\theta_t-\gamma_1 g]J+[h'-\frac{g'h}{g}]\theta_t^2+[\gamma_1g-h^2]u_x+h(\theta)c(\theta)(c(\theta)\theta_x)_x\bigg]\,dz}\\
    &{-\int_{-\infty}^x\frac{g'\theta_t}{g}J\,dz}\\
    &=\big(g(\theta)\hat A_x\big)_x+\int^x_{-\infty}\frac{1}{g}\bigg[[-\gamma_1]gJ+[h'-\frac{g'h}{g}]\theta_t^2+[\gamma_1g-h^2]u_x+h(\theta)c(\theta)(c(\theta)\theta_x)_x\bigg]\,dz
\end{align*}

Integrating by parts, one has 
\begin{align*}\nonumber
    \hat A_t&=g(\theta)\hat A_{xx}-\gamma_1\hat A+g'\theta_xJ+\bigg[\int_{-\infty}^x[\frac{h'}{g}-\frac{g'h}{g^2}]\theta_t^2-[\gamma_1-\frac{h^2}{g}]'\theta_zu-(\frac{h(\theta)c(\theta)}{g})'c(\theta)\theta_z^2\,dz\bigg]\\
    &\quad+[\gamma_1-\frac{h^2}{g}]u+\frac{h(\theta)c^2(\theta)}{g}\theta_x\\
        &:=g(\theta)\hat A_{xx}-\gamma_1\hat A+g'\theta_x J+\hat F( \theta,u).
\end{align*}
After a linear transformation, it is easy to get the equation (\ref{AAeq}) for $A$ with $u=v_x.$



\section{Proof of Theorem \ref{lemma3.2} for energy estimate.\label{appE}}
Now we prove Theorem \ref{lemma3.2} for the energy decay, with the energy defined by
\begin{align}\label{D1}
    \mathcal{E}(t):=\int_\R\theta_t^2+c^2(\theta)\theta_x^2+u^2\,dx.
\end{align}

The proof is the same as the one in \cite{CHL20}. We include a brief proof here and refer interested readers to \cite{CHL20} for details.

\begin{proof} We first consider the bounded region $D_{t}$ in the $(X,Y)$-plane in Figure \ref{fig_last}.
 Denote the bounded region in the $(x,t)$ plane corresponding to $D_t$ by $\mathcal{D}.$

\bigskip
\begin{figure}[ht]
\centering
\begin{tikzpicture}
\draw (-2,0) -- (-2,3) node[anchor=south] {$t$};
\draw (-5,0) -- (2,0) node[anchor=west] {$x$};
\draw (-5,1) -- (2,1) node[anchor=west] {$t$};
\filldraw [gray] (-2.5,1) node[anchor=north] {$\mathcal{D}$};
\filldraw [gray] (0,0) circle (2pt) node[anchor=north] {$(c,0)$};
\filldraw [gray] (-4,0) circle (2pt) node[anchor=north] {$(d,0)$};
\filldraw [gray] (-0.3,1) circle (2pt) node[anchor=south west] {$(b,t)$};
\filldraw [gray] (-3.6,1) circle (2pt) node[anchor=south east] {$(a,t)$};
\draw (0,0) .. controls (-0.4,3/2) .. (-2,2.5) node[anchor=west] {$x^-$};
\draw (-4,0) .. controls (-7/2,3/2) .. (-2,2.5) node[anchor=east] {$x^+$};
\end{tikzpicture}\qquad
\includegraphics[width=.35\textwidth]{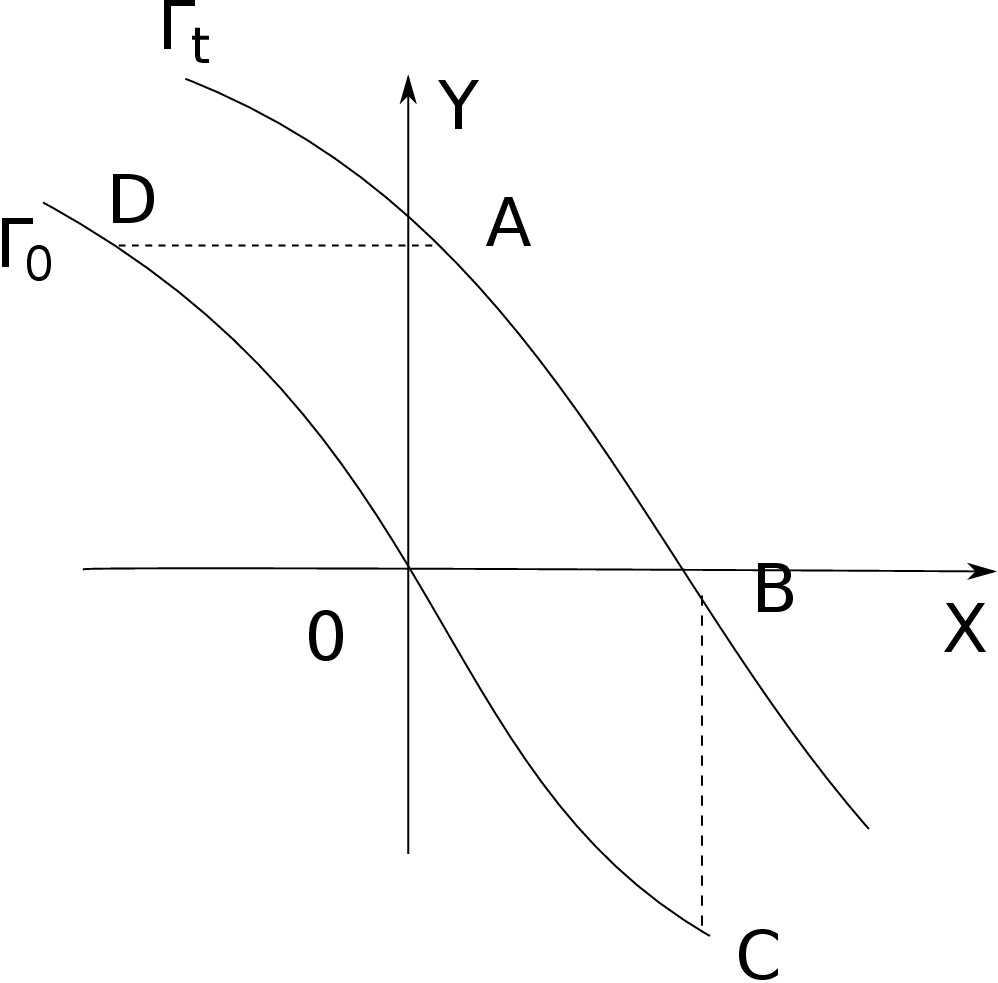}
\caption{{\em Left: The bounded region $\mathcal{D}$ between the two characteristics $x^-$ and $x^+$ and the horizontal line at time $t$. Right: The transformed region $D_t$.}}
\label{fig_last}
\end{figure}

Using $\displaystyle J=\frac{v_t}{g}$ almost everywhere and inequality \eqref{eqn3.3} we have
\beq\label{eEst}
\begin{split}
\int_a^b (\theta_t^2+c^2(\theta)\theta_x^2)(x,t)\,dx \leq& \int_d^c (\theta_t^2+c^2(\theta)\theta_x^2)(x,0)\,dx\\
&-2\iint_{\mathcal{D}}\theta^2_t\,dxdt-2\iint_{\mathcal{D}}\frac{v_t}{g}h\theta_t\,dxdt.
\end{split}
\eeq

We have shown that $J=\frac{v_t}{g}=v_{xx}+\frac{h}{g}\theta_t$ holds true in the $L^2(\Omega_T)$ sense. Thus,
\beq\label{eqn3.4}
\begin{split}
\iint_{\mathcal{D}}\frac{v_t}{g}h\theta_t\,dxdt=\iint_{\mathcal{D}}(\frac{v_t^2}{g^2}-v_{xx} v_t)\,dxdt,
\end{split}
\eeq
where $v_{xx}$, $\theta_t$, $v_t\in L^2(\mathcal{D})$.
Integrating by parts, the second term becomes
\beq\label{eqn3.5}
\begin{split}
-\iint_{\mathcal{D}}v_{xx}v_t\,dxdt
=&\iint_{\mathcal{D}}v_{x}v_{xt}\,dxdt+\int_{AD}\frac{v_xv_t}{\sqrt{1+c^2}}\,ds-\int_{CB}\frac{v_xv_t}{\sqrt{1+c^2}}\,ds\\
=&\frac12\int_a^b|v_x|^2(x,t)\,dx-\frac12\int_d^c|v_x|^2(x,0)\,dx\\
&\quad-\int_{0}^t(v_tv_x)(x^+(t),t)\,dt-\int_{0}^t (v_tv_x)(x^-(t),t)\,dt,
\end{split}
\eeq
where $x^+(t)$ and $x^-(t)$ are characteristic $DA$ and $CB$ respectively.
Substitute this identity into \eqref{eqn3.4} to get 
\beq\label{eqn3.6}
\begin{split}
\iint_{\mathcal{D}}\frac{v_t}{g}h\theta_t\,dxdt=&\iint_{\mathcal{D}}\frac{v_t^2}{g^2}\,dxdt +\frac12\int_a^b u^2(x,t)\,dx-\frac12\int_d^c u^2(x,0)\,dx\\
&-\int_{0}^t(v_tv_x)(x^+(t),t)\,dt-\int_{0}^t (v_tv_x)(x^-(t),t)\,dt.
\end{split}
\eeq
Because $v_t$ is uniformly bounded and $v_x=u\in L^2\cap L^\infty\cap C^\alpha(\overline\Omega_T)$, 
 \[-\int_{0}^t(v_tv_x)(x^+(t),t)\,dt-\int_{0}^t (v_tv_x)(x^-(t),t)\,dt\to 0\;\mbox{ as }\; (a,b)\rightarrow (-\infty, \infty).\]
 Taking $(a,b)\to (-\infty,\infty)$  (so $(d,c)\to (-\infty,\infty)$ too) in \eqref{eEst} and \eqref{eqn3.6}, we have
\beq\label{eqn3.3-2}
\begin{split}
\mathcal E(t)
\leq \mathcal E(0) -\iint_{\R\times [0,t]}(\frac{v_t^2}{g^2}+\theta^2_t)\,dxdt.
\end{split}
\eeq
This completes the proof. 
\end{proof}

\bigskip

\noindent
{\bf Acknowledgement:} The authors thank the anonymous referee whose comments helped improve the paper.
G. Chen and M. Sofiani's research is partially supported by NSF grant DMS-2008504 and DMS-2306258. W. Liu's research is partially supported by Simons Foundation 
Mathematics and Physical Sciences-Collaboration Grants for Mathematicians \#581822.


\begin{thebibliography}{88}

\bibitem{BC2015} A. Bressan and G. Chen, 
Lipschitz metric for a class of nonlinear wave equations.
{\em Arch. Ration. Mech. Anal.} {\bf 226} (2017), no. 3, 1303-1343.

\bibitem{BC} A. Bressan and G. Chen,  
Generic regularity of conservative solutions to a nonlinear wave equation.  
{\em Ann. I. H. Poincar\'{e}--AN} {\bf  34} (2017),  no. 2, 335-354.

\bibitem{BCZ} A. Bressan, G. Chen, and Q. Zhang,
Unique conservative solutions to a variational wave equation. 
{\em Arch. Ration. Mech. Anal.} {\bf 217} (2015), no. 3, 1069-1101.

\bibitem{BH} A.~Bressan and T.~Huang
Representation of dissipative solutions to a nonlinear variational wave equation. 
{\em Comm. Math. Sci.} {\bf 14} (2016), 31-53.

\bibitem{BHY}
A.~Bressan, T.~Huang, and F.~Yu,
Structurally stable singularities for a nonlinear wave equation, 
{\em Bull. Inst. Math.,  Acad. Sin. (N.S.)}
{\bf 10} (2015), no. 4, 449-478. 


\bibitem{BZ} A.  Bressan and Y. Zheng,  
Conservative solutions to a nonlinear variational wave equation.
{\em Comm. Math. Phys.} {\bf 266} (2006), 471-497.

\bibitem{HB} H. Brezis, {\em Functional Analysis, Sobolev Spaces and Partial Differential Equations}, Springer, New York, 2011.

\bibitem{CCD} H. Cai, G. Chen, and Y. Du, 
Uniqueness and regularity of conservative solution to a wave system modeling nematic liquid crystal.
{\em J. Math. Pures Appl.} {\bf 9} (2018), no 117, 185-220.

\bibitem{liucalderer00} M.~Calderer and C.~Liu.
  Liquid crystal flow: Dynamic and static configurations.
 {\em SIAM J.  Appl. Math.} {\bf 60} (2000), 1925-1949.

\bibitem{Cha} S. Chanderasekhar, {\em Liquid Crystals}, 2nd Ed. 
Cambridge University Press 1992.

\bibitem{CHL} G. Chen, T. Huang, and C. Liu, 
Finite time singularities for hyperbolic systems.
{\em SIAM J. Math. Anal.} {\bf 47} (2015), no. 1, 758-785.

\bibitem{CHL20} G. Chen, T. Huang, and W. Liu, 
Poiseuille flow of nematic liquid crystals via the full Ericksen-Leslie model. 
{\em Arch. Ration. Mech. Anal.} {\bf 236} (2020), no. 2, 839–891.

\bibitem{CS22} G. Chen and M. Sofiani, 
Singularity formation for the general Poiseuille flow of nematic liquid crystals.. 
{\em Communications on Applied Mathematics and Computation} (2022), 1-18.


\bibitem{CZZ12} G. Chen, P. Zhang, and Y. Zheng, 
Energy Conservative Solutions to a Nonlinear Wave
System of Nematic Liquid Crystals. 
{\em Comm. Pure Appl. Anal.} {\bf 12} (2013), no 3, 1445-1468.

\bibitem{CZ12} G. Chen and Y. Zheng, 
Singularity and existence to a wave system of nematic liquid crystals. 
{\em J. Math. Anal. Appl.} {\bf 398} (2013), 170-188.

\bibitem{DeGP} P. G. De Gennes and J. Prost, 
{\em The Physics of Liquid Crystals}, 2nd Ed.
International Series of Monographs on Physics, {\bf 83}, 
Oxford Science Publications, 1995.


\bibitem{Eri76} J. L. Ericksen, Equilibrium Theory of Liquid Crystals.
{\em Advances in Liquid Crystals} (G. H. Brown, ed.), Vol. 2, 233-298.
 Academic Press, New York, 1976. 

\bibitem{ericksen62} J. L. Ericksen,  Hydrostatic theory of liquid crystals.
{\em Arch. Ration. Mech. Anal.} {\bf 9} (1962), 371-378.

\bibitem{Evans} L.C. Evans, {\em Partial Differential Equations,} AMS Press.



\bibitem{Fri} A. Friedman, {\em Partial Differential Equations of Parabolic Type,} Prentice-Hall, Inc., 1964.

\bibitem{frank58} F. C. Frank, 
I. Liquid Crystals. On the theory of liquid crystals. 
{\em Discussions of the Faraday Society} {\bf 25} (1958), 19-28.


\bibitem{GHZ} R.~T.~Glassey, J.~K.~Hunter, and Y.~Zheng, 
Singularities in a nonlinear variational wave equation.
{\em J. Differential Equations} {\bf 129} (1996), 49-78.

\bibitem{HardtK87} R. Hardt and D. Kinderlehrer, 
Mathematical questions of liquid crystal theory. 
In Theory and applications of liquid crystals (Minneapolis, Minn., 1985).  
{\em IMA Vol. Math. Appl.} {\bf Vol 5}, 151-184, Springer, New York, 1987.


\bibitem{HR} H.~Holden and X.~Raynaud, 
Global semigroup of conservative solutions of the nonlinear variational wave equation.
{\em Arch. Ration. Mech. Anal.} {\bf 201} (2011),  871-964.

\bibitem{hongxin12} M.~C. Hong and Z.~P. Xin,
\newblock Global existence of solutions of the liquid crystal flow for the
  {O}seen-{F}rank model in {$\mathbb R^2$}.
\newblock {\em Adv. Math.} {\bf 231} (1012),1364-1400.


\bibitem{huanglinwang14} J. R. Huang, F. H. Lin, and C. Y. Wang, 
Regularity and existence of global solutions to the Ericksen-Leslie system in $\mathbb R^2$. 
{\em Comm. Math. Phys.} {\bf 331} (2014), no. 2, 805-850.

\bibitem{HLLW16} T. Huang, F. H. Lin, C. Liu, and C. Y. Wang,
Finite time singularity of the nematic liquid crystal flow in dimension three. 
{\em Arch. Ration. Mech. Anal.} {\bf 221} (2016), 1223-1254.


\bibitem{jiangluo17} N. Jiang and Y. L Luo, 
On well-posedness of Ericksen-Leslie’s hyperbolic incompressible liquid crystal model. {\em SIAM J. Math. Anal.} {\bf 51} (2019), no. 1, 403-434.



\bibitem{llwwz19}
C.~C. Lai, F.~H. Lin, C.~Y. Wang, J. C. Wei, and Y.~F. Zhou.
\newblock Finite time blow-up for the nematic liquid crystal flow in dimension
  two.
\newblock {\em arXiv:1908.10955}, 2019.

\bibitem{leslie68} F. M. Leslie,  
Some thermal effects in cholesteric liquid crystals. 
{\em Proc. Roy. Soc. A.} {\bf 307} (1968), 359-372.

  \bibitem{Les} F. M. Leslie, 
  Theory of Flow Phenomena in Liquid Crystals.
{\em Advances in Liquid Crystals}, Vol. 4, 1-81.
 Academic Press, New York, 1979. 
 
 \bibitem{LTX16} J. K. Li, E. Titi, and Z. P. Xin, 
 On the uniqueness of weak solutions to the Ericksen-Leslie liquid
crystal model in $\mathbb R^2$. 
{\em Math. Models Methods Appl. Sci.} {\bf  26} (2016), no. 4, 803-822.

\bibitem{Lidaqian} T.T. Li,
Global Classical Solutions   for Quasilinear Hyperbolic Systems,
{\em Research in Applied mathematics 32}, Wiley-Masson, 1994.


\bibitem{liyu} T.T. Li and W. Yu, Boundary Value Problems for Quasilinear Hyperbolic Systems,
{\em Duke University Mathematics Series}, Durham, 1985.
 
\bibitem{lin89} F. H. Lin, 
Nonlinear theory of defects in nematic liquid crystals;  phase transition and phenomena. 
{\em Comm.  Pure  Appl. Math.} {\bf 42} (1989), 789-814.

\bibitem{linlinwang10} F. H. Lin, J. Y. Lin, and C. Y. Wang, 
Liquid crystal flows in two dimensions. 
{\em Arch. Ration. Mech. Anal.} {\bf 197} (2010), 297-336.
 
 \bibitem{linlius01} F. H. Lin and C. Liu, 
 Static and dynamic theories of liquid crystals. 
 {\em J. Partial Differential Equations} {\bf 14} (2001), 289-330.

\bibitem{linwang16} F. H. Lin and C. Y. Wang, 
Global existence of weak solutions of the nematic liquid crystal
flow in dimension three. 
{\em Comm. Pure   Appl. Math.}  {\bf 69} (2016), 1532-1571.
 

\bibitem{linwang10} F. H. Lin and C. Y. Wang, 
On the uniqueness of heat flow of harmonic maps and hydrodynamic
flow of nematic liquid crystals. 
{\em Chin. Ann. Math., Ser. B} {\bf 31} (2010), 921-938.

\bibitem{linwangs14} F. H. Lin and C. Y. Wang,
 Recent developments of analysis for hydrodynamic flow of nematic liquid crystals. 
{\em Philos. Trans. R. Soc. Lond. Ser. A Math. Phys. Eng. Sci.} {\bf 372} (2014), 
no. 2029, 20130361, 18 pp.

\bibitem{OH} H. H., Olsen, H. Holden, The Kolmogorov-Riesz compactness theorem
{\em Expositiones Mathematicae} {\bf 28} (2010) 385-394.

\bibitem{oseen33} C. W. Oseen, The theory of liquid crystals. 
{\em Trans.  Faraday Soc.} {\bf 29} (1933), no. 140, 883-899.

\bibitem{Parodi70} O. Parodi, 
Stress tensor for a nematic liquid crystal. 
{\em J. Phys.} {\bf  31} (1970), 581–584.

\bibitem{MS} M. Sofiani, 
On parabolic partial differential equations with Hölder continuous diffusion coefficients. {\em 	 
10.48550/arXiv.2212.08972} (2022).


\bibitem{wangwang14} M. Wang and W. D. Wang, 
Global existence of weak solution for the 2-D Ericksen-Leslie system. 
{\em Calc. Var. Partial Differential Equations} {\bf 51} (2014), no. 3-4, 915-962.

\bibitem{WZZ13} W. Wang, P. W. Zhang, and Z. F. Zhang, 
Well-posedness of the Ericksen-Leslie system. 
{\em Arch. Ration. Mech. Anal.} {\bf 210} (2013), no. 3, 837-855.

 

\bibitem{ZZ03} P. Zhang and Y. Zheng,  
Weak solutions to a nonlinear variational wave equation. 
{\em Arch. Ration. Mech. Anal.} {\bf 166} (2003), 303--319.

\bibitem{ZZ10} P. Zhang and Y. Zheng, Conservative solutions to 
a  system of variational wave equations of nematic liquid crystals. 
{\em Arch. Ration. Mech. Anal.} {\bf 195} (2010), 701-727.

\bibitem{ZZ11} P. Zhang and Y. Zheng, 
Energy conservative solutions to a one-dimensional full variational wave system. 
{\em Comm. Pure Appl. Math.} {\bf 55} (2012), 582-632.

\end{thebibliography}
\end{document}